\DeclareMathOperator{\Id}{Id}
\DeclareMathOperator{\id}{id}
\def\vectorfields#1{{\mathcal X}(#1)}
\def\fpd#1#2{{\displaystyle\frac{\partial #1}{\partial #2}}}
\newcommand{\la}{\mathfrak{g}}
\newcommand{\lam}{\mathfrak{m}}
\newcommand{\lan}{\mathfrak{n}}
\newcommand{\lak}{\mathfrak{k}}
\newcommand{\so}{\mathfrak{so}}
\newcommand{\gl}{\mathfrak{gl}}
\newcommand{\sli}{\mathfrak{sl}}
\newcommand{\R}{\mathbb{R}}
\newcommand{\Sph}{\mathbb{S}}
\begin{document}


\newcommand{\arXivNumber}{1606.07649}

\renewcommand{\PaperNumber}{115}

\FirstPageHeading

\ShortArticleName{Un-Reduction of Systems of Second-Order Ordinary Dif\/ferential Equations}

\ArticleName{Un-Reduction of Systems of Second-Order\\ Ordinary Dif\/ferential Equations}

\Author{Eduardo GARC\'IA-TORA\~NO ANDR\'ES~$^\dag$ and Tom MESTDAG~$^\ddag$}

\AuthorNameForHeading{E.~Garc\'{\i}a-Tora\~no Andr\'es and T.~Mestdag}

\Address{$^\dag$~Departamento de Matem\'atica, Universidad Nacional del Sur, CONICET, \\
\hphantom{$^\dag$}~Av.\ Alem 1253, 8000 Bah\'ia Blanca, Argentina}
\EmailD{\href{mailto:egtoranoandres@gmail.com}{egtoranoandres@gmail.com}}

\Address{$^\ddag$~Department of Mathematics and Computer Science, University of Antwerp,\\
\hphantom{$^\ddag$}~Middelheimlaan 1, B--2020 Antwerpen, Belgium}
\EmailD{\href{mailto:mestdagtom@gmail.com}{mestdagtom@gmail.com}}

\ArticleDates{Received August 12, 2016, in f\/inal form November 29, 2016; Published online December 07, 2016}

\Abstract{In this paper we consider an alternative approach to ``un-reduction''. This is the process where one associates to a Lagrangian system on a manifold a dynamical system on a principal bundle over that manifold, in such a way that solutions project. We show that, when written in terms of second-order ordinary dif\/ferential equations (SODEs), one may associate to the f\/irst system a (what we have called) ``primary un-reduced SODE'', and we explain how all other un-reduced SODEs relate to it. We give examples that show that the considered procedure exceeds the realm of Lagrangian systems and that relate our results to those in the literature.}

\Keywords{reduction; symmetry; principal connection; second-order ordinary dif\/ferential equations; Lagrangian system}

\Classification{34A26; 37J15; 70H33; 70G65}

\section{Introduction}

One of the much-discussed aspects of Lagrangian systems with a symmetry group is their reduction to the so-called Lagrange--Poincar\'e equations \cite{CMR} (but see also \cite{review} for an approach using Lie algebroids, or \cite{MC} for an approach that is relevant for this paper). The idea is that one may cancel out the symmetry group, and, once a principal connection has been invoked, arrive at two sets of coupled equations on a quotient manifold, the so-called horizontal and vertical equations. The horizontal equation looks again a bit like a Lagrangian equation (for a reduced-type Lagrangian function), but it has extra non-conservative force terms and, equally important, it is in general not decoupled from the vertical equation.

In recent years, there has been some interest in so-called un-reduction \cite{unred2,unred3,unreduction, CH}. The motivation behind this paper is related to the un-reduction theorem (Theorem~5.1) of the paper~\cite{unreduction}. Given a Lagrangian~$\ell$ on a quotient manifold $\bar M=M/G$ and a principal connection~$\omega$ on~$\pi\colon M \to M/G$, the theorem tells one how to associate a ``Lagrangian system with extra non-conservative forces'' on $M$ which has the property that its solutions project to those of the Euler--Lagrange equations of~$\ell$. The methodology and reasoning of~\cite{unreduction} is almost entirely based on Lagrange--Poincar\'e reduction, and on the fact that one may choose the external forces in such a way that the inconvenient curvature terms which appear in the horizontal Lagrange--Poincar\'e equation vanish. In our opinion, it may be more advantageous to think of this problem outside of the Lagrange--Poincar\'e framework. The Lagrange--Poincar\'e equations give essentially a dynamical system on the manifold $(TM)/G$ (a Lie algebroid), while the Euler--Lagrange equations of $\ell$ are def\/ined on $T(M/G)$ (a tangent bundle). One of the purposes of this paper is to show that for a comparison of dynamics one may remain in the category of tangent bundles.

The Euler--Lagrange equations of $\ell$ are but a particular example of a SODE, a system of second-order ordinary dif\/ferential equations. The un-reduced equations one f\/inds in \cite{unreduction} are no longer Euler--Lagrange equations, since the presence of extra non-conservative forces breaks the variational character of these equations, but they are still a SODE. We will show that the un-reduction process is very natural in the context of SODEs, and that one may identify a~(what we call) ``primary un-reduced SODE'', to which all other un-reduced SODEs easily relate. There is, in our setting, no need to invoke Lagrange--Poincar\'e equations or external forces. The issues related to what is called ``coupling distortion'' and ``curvature distortion'' in \cite{unreduction} are, in our opinion, side-ef\/fects from relying on a Lagrange--Poincar\'e-based approach to un-reduction (as opposed to a SODE-based approach).

In Section~\ref{sectionlifted} we identify the two lifted principal connections that lie at the basis of our un-reduction. After the def\/inition of the primary un-reduced SODE in Section~\ref{sectionsode} we state a~proposition about all other SODEs whose base integral curves project on those of the given SODE. The canonical connection of a Lie group gives, in Section~\ref{sectioncanonical}, a natural environment where all the introduced concepts can be clarif\/ied. Since it is not always variational (i.e., its geodesics are not always solutions of some Euler--Lagrange equations) it shows that our discussion is a~meaningful generalization of the one in~\cite{unreduction}. In Section~\ref{sectionLagrangian} we specify to the case of a Lagrangian SODE, and we discuss the example of Wong's equations and the ef\/fect of curvature distortion in Section~\ref{sectionvertical}. In Section~\ref{sectionsecond} we say a few words about a second un-reduction procedure. We end the paper with some possible lines of future research.

The advantage of un-reduction remains unaltered in our setting, as it is explained in \cite{unreduction}: If one knows that solutions of a second-order system on $\bar M$ are but the projection of those of a~system on $M$, one may compute these solutions by making use of any coordinates on $M$, not necessarily those adapted to the bundle structure of~$\pi$. This may be benef\/icial when, e.g., the equations on $M$ are more convenient to deal with numerically, which is precisely one of the main motivations to study un-reduction. We refer the interested reader to~\cite{unreduction} and references therein for a more detailed discussion.

\section{Preliminaries}

Consider a manifold $M$ and its tangent bundle $\tau_M\colon TM\to M$. The vector f\/ields $X^C$ and $X^V$ on $TM$ stand, respectively, for the complete lift and the vertical lift of a vector f\/ield $X$ on $M$. In natural coordinates $\big(x^A,{\dot x}^A\big)$ on $TM$ and for $X=X^A \partial/\partial x^A$ we get
\begin{gather*}
X^C = X^A \fpd{}{{x}^A} + {\dot x}^B\fpd{X^A}{x^B} \fpd{}{{\dot x}^A} \qquad \text{and} \qquad X^V= X^A \fpd{}{{\dot x}^A}.
\end{gather*}
The Lie brackets of these vector f\/ields are $\big[X^C,Y^C\big] = [X,Y]^C$, $\big[X^C,Y^V\big] = [X,Y]^V$ and $\big[X^V,Y^V\big]=0$.

The notion of complete and vertical lifts also extends to functions and $(1,1)$ tensor f\/ields, as follows. For a function $f$ on $M$, its lifts are the functions $f^V =f\circ \tau_M$ and $f^C = {\dot x}^A \big(\partial f/\partial x^A\big)$ on~$TM$. Let $A$ be a $(1,1)$ tensor f\/ield on~$M$. We may lift it to two $(1,1)$ tensor f\/ields~$A^C$ and~$A^V$ on~$TM$, as follows
\begin{gather*}
A^C\big(X^V\big)=(A(X))^V,\!\!\!\!\qquad A^C\big(X^C\big)=(A(X))^C,\!\!\!\!\qquad A^V\big(X^V\big)=0,\!\!\!\!\qquad A^V\big(X^C\big)=(A(X))^V.
\end{gather*}
More details on this can be found in \cite{YI}, such as the following immediate properties:
\begin{gather*}
{\mathcal L}_{X^V}A^C = ({\mathcal L}_XA)^V, \qquad {\mathcal L}_{X^C}A^C = ({\mathcal L}_XA)^C, \qquad {\mathcal L}_{X^V}A^V = 0, \qquad {\mathcal L}_{X^C}A^V = ({\mathcal L}_XA)^V.
\end{gather*}

\begin{definition}
A vector f\/ield $\Gamma$ on $TM$ is a second-order dif\/ferential equations f\/ield (SODE in short) on $M$ if all its integral curves $\gamma\colon I \to TM$ are lifted curves, that is of the type $\gamma = \dot c$, for base integral curves $c\colon I \to M$.
\end{definition}
As such, $\Gamma$ takes the form
\begin{gather*}
\Gamma = {\dot x}^A \fpd{}{x^A}+ f^A(x,{\dot x}) \fpd{}{{\dot x}^A}.
\end{gather*}
As a vector f\/ield on $TM$, a SODE is characterized by the property that $T\tau_M \circ\Gamma = \id_M$. A~SODE~$\Gamma$ on~$M$ always exists, since a Riemannian metric on a (paracompact) manifold always exists, and we may consider its geodesic spray.

In what follows we will often express a SODE in terms of a frame of vector f\/ields $Z_A = Z_A^B \partial/\partial x^B$, which are not necessarily coordinate vector f\/ields. In that context, we say that the quasi-velocities $\big(v^A\big)$ of a vector $v_m\in T_mM$ are the components of~$v_m$ with respect to that basis, i.e., $v_m=v^A Z_A(m)$. Their relation to the standard f\/ibre coordinates ${\dot x}^A$ is ${\dot x}^A = Z^A_B(m) v^B$.

In terms of the frame $\{Z_A\}$, the SODE $\Gamma$ takes the form
 \begin{gather*}
 \Gamma = v^A Z_A^C + F^A Z_A^V,
 \end{gather*}
meaning that two SODEs $\Gamma_1$ and $\Gamma_2$ only dif\/fer in their coef\/f\/icients $F_1^A$ and $F_2^A$. Let $c(t) = \big(x^A(t)\big)$ be a base integral curve of $\Gamma$. The f\/ibre coordinates of integral curves $\dot c(t)$ of $\Gamma$ in $TM$ may also be expressed in quasi-velocities as ${\dot c}(t) = v^A(t) Z_A(c(t))$. The functions $\big(x^A(t),v^A(t)\big)$ are then solutions of the equations
 \begin{gather*}
{\dot x}^A(t) = Z^A_B(x(t))v^B(t), \qquad {\dot v}^A = F^A (x(t),v(t)).
 \end{gather*}

Assume that $M$ comes equipped with a free and proper (left) action $\Phi\colon G \times M \to M $ of a Lie group $G$, such that $\pi\colon M \to \bar M=M/G$ is a principal f\/ibre bundle. For each element $\xi$ in the Lie algebra $\la$ we may obtain a fundamental vector f\/ield $\xi_M$ on $M$, def\/ined by $\xi_M(m) = T\Phi_m(\xi)$ where, as usual $\Phi_m\colon G\to M$ denotes the map $\Phi(\cdot,m)$. In what follows it will be more convenient to write $\xi_M=\tilde \xi$ for a fundamental vector f\/ield on $M$. We will also assume that $G$ is connected. In that case, a vector f\/ield $X$ on $M$ is $G$-invariant if and only if $[X,\tilde\xi]=0$, for all $\xi\in\la$.

There exist two ways to lift the action $\Phi$ to an action on $TM$. The f\/irst action is the $G$-action $\Phi^{TM}$ on $TM$, def\/ined by $\Phi^{TM}_g = T(\Phi_g)$ where again we write $\Phi_g=\Phi(g,\cdot)\colon M\to M$. With this action $\pi^{TM}\colon TM \to (TM)/G$, the so-called Atiyah bundle, is a principal $G$-bundle. Fundamental vector f\/ields of this action are vector f\/ields on $TM$ of the type ${\tilde\xi}^C$.

The tangent manifold $TG$ of a Lie group $G$ is also a Lie group. It may be identif\/ied with the semidirect product $G\times \la$, and its Lie algebra with $\la\times\la$. The second action on $TM$ is the $TG$-action, given by $T\Phi\colon TG \times TM \to TM$. In the current trivialization, this action may be written as
\begin{gather*}
(g,\xi)\cdot v_m=T\Phi_g \big(v_m+\tilde\xi(m)\big).
\end{gather*}
With this action $(TM)/(TG) =T(M/G)$ and $T\pi\colon TM \to T(M/G)$ is also a principal bundle, but with structure group $TG$ (see, e.g.,~\cite{KMS}). The fundamental vector f\/ields that correspond to~$T\Phi$ are linear combinations of the vector f\/ields on~$TM$ given by $(\xi,0)_{TM} = {\tilde\xi}^C$ and \mbox{$(0,\xi)_{TM} = {\tilde\xi}^V$}.

To clarify the concepts we introduce later on, we will often use local coordinates $(x^i)$ on $\bar M=M/G$ and coordinates $\big(x^A\big)=(x^i,x^a)$ on $M$ that are adapted to the bundle $\pi$. On the tangent manifold, however, we will use quasi-velocities with respect to a specif\/ic frame. If~$\{{E}_a\}$ is a basis for the (left) Lie algebra $\la$, we will denote the fundamental vector f\/ields (for the $G$-action on~$M$) by ${\tilde E}_a$. These vector f\/ields span the vertical distribution of $\pi$, and their Lie brackets are given by $[{\tilde E}_a,{\tilde E}_b] = - C_{ab}^c {\tilde E}_c$, if we denote the structure constants of $\la$ by $C^c_{ab}$.

We now assume that we have chosen a principal connection on $\pi$ (such a connection always exists). Let $X_i$ be the horizontal lifts, with respect to this connection, of the coordinate f\/ields~$\partial/\partial x^i$ on~$M/G$. The fact that the horizontal lifts are $G$-invariant vector f\/ields may be expressed as $[X_i,{\tilde E}_a] = 0$. Given that
\begin{gather*}
\big[X_i^C,{\tilde E}_a^{\{C,V\}}\big] = \big[X_i,{\tilde E}_a\big]^{\{C,V\}} = 0, \qquad \big[X_i^V,{\tilde E}_a^{\{C,V\}}\big]=0,
\end{gather*}
we see that both $X_i^C$ and $X_i^V$ are invariant vector f\/ields, for both the $G$-action on $TM$, and the $TG$-action on $TM$. The Lie bracket $[X_i,X_j] = R^a_{ij}{\tilde E}_a$ of these horizontal vector f\/ields represents the curvature of the connection.

We will denote the quasi-velocities with respect to the frame $\big\{X_i,{\tilde E}_a\big\}$ as $(v^i, v^a)$. Actually, since the vector f\/ields~$X_i$ project, we may conclude that the quasi-velocities $v^i$ can be identif\/ied with the natural f\/ibre coordinates on $M/G$, i.e., $v^i={\dot x^i}$. Some further immediate properties are included in the list below (see, e.g.,~\cite{CM2, MC} for more details):
\begin{alignat*}{5}
& X^C_i(v^j)=0,\qquad&& X^V_i(v^j)=\delta^j_i, \qquad&& X^C_i(v^a)=-R^a_{ij}v^j,\qquad&& X^V_i(v^a)=0,& \\
& {\tilde E}^C_a(v^i)=0,\qquad &&{\tilde E}^V_a(v^i)=0,\qquad && {\tilde E}^C_a(v^b)=C_{ac}^bv^c,\qquad && {\tilde E}^V_a(v^b)=\delta^b_a.&
\end{alignat*}
A SODE $\Gamma$ takes the form
\begin{gather*}
\Gamma = v^i X_i^C + v^a {\tilde E}^C_a + F^i X_i^V + F^a {\tilde E}^V_a
\end{gather*}
when expressed in this frame. It will be $G$-invariant when
\begin{gather*}
\big[\Gamma, {\tilde E}^C_a\big] = 0 \quad \Leftrightarrow\quad {\tilde E}_a^C(F^i)=0, \qquad {\tilde E}_a^C(F^b)= F^c C_{ac}^b.
\end{gather*}
It is important to realize, however, that a SODE can never be $TG$-invariant since, besides the previous properties, it would also have to satisfy $[\Gamma, {\tilde E}^V_a] = 0$, which is impossible because of the f\/irst term in
\begin{gather*}
\big[\Gamma, {\tilde E}^V_a\big] = - {\tilde E}_a^C - {\tilde E}_a^V(F^i)X_i^V - \big({\tilde E}_a^V\big(F^b\big) + v^cC^b_{ca}\big) {\tilde E}_b^V.
\end{gather*}
For this reason a SODE on $M$ can never be the horizontal lift of a principal connection on the $TG$-bundle $T\pi\colon TM \to T(M/G)$.

\section{Two lifted connections} \label{sectionlifted}

Assume that a principal connection on $\pi$ is given. There are many equivalent ways to represent this principal connection. For example, we may either consider it as being given by a $(1,1)$-tensor f\/ield $\omega$ on $M$ (its ``vertical projection operator'') or by a connection map $\varpi\colon TM \to \la$. These two approaches are related as $\omega(X)(m) = \widetilde{\varpi(X(m))}(m)$. A third way to def\/ine a connection makes use of its horizontal lift. The horizontal lift of a vector f\/ield $\bar X$ on $\bar M$ is the unique vector f\/ield~${\bar X}^h$ on $M$ that projects on $\bar X$ and that is such that $\omega\big({\bar X}^h\big) = 0$. In what follows we will use these three def\/initions simultaneously.

The principal connection lifts to a principal connection on each of the principal bundles $\pi^{TM}$ and $T\pi$. In \cite[Proposition~3]{MC}, it is shown that the complete lift $\omega^C$ (a $(1,1)$ tensor f\/ield on $TM$) represents a connection on the f\/ibre bundle $T\pi\colon TM \to T(M/G)$. Moreover, since ${\mathcal L}_{{\tilde \xi}}\omega =0$, the properties of complete lifts lead to ${\mathcal L}_{{\tilde \xi}^C}\omega^C=0$ and ${\mathcal L}_{{\tilde \xi}^V}\omega^C=0$, from which we may conclude that this connection is principal with respect to the structure group $TG$. The idea of using the complete lift of a connection on a bundle $Y \to X$ to def\/ine a connection on $TY \to TX$ comes from the paper \cite{Vilms} by Vilms, who used it in the context of vector bundles. For that reason, we will refer to $\omega^C$ as the Vilms connection.
 \begin{definition}
 The Vilms connection of a principal connection $\omega$ on $\pi\colon M \to \bar M$ is the $TG$-principal connection on $T\pi\colon TM \to T\bar M$ whose vertical projection operator is given by the tensor f\/ield $\omega^C$.
\end{definition}

The Vilms connection is characterized by its action on vector f\/ields on $TM$, and therefore by
\begin{gather*}
\omega^C\big(X^C_i\big)=0, \qquad \omega^C\big(X^V_i\big)=0, \qquad \omega^C\big({\tilde E}^C_a\big) = {\tilde E}^C_a, \qquad \omega^C\big({\tilde E}^V_a\big) = {\tilde E}^V_a.
\end{gather*}

\begin{proposition} \label{newprop} Let $\omega$ be a principal connection on $\pi$. The horizontal lift $H$ of the Vilms connection can be characterized in terms of the horizontal lift~$h$ of $\omega$, by means of the relations
\begin{gather*}
\big({\bar X}^C\big)^H = \big({\bar X}^h\big)^C \qquad \text{and} \qquad \big({\bar X}^V\big)^H = \big({\bar X}^h\big)^V,
\end{gather*}
for any vector field $\bar X$ on $\bar M$.
\end{proposition}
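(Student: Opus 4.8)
The plan is to use the standard characterization of the horizontal lift of a principal connection in terms of its vertical projection operator. For a vector field $\bar Y$ on $T\bar M$, the horizontal lift $H(\bar Y)$ is the \emph{unique} vector field on $TM$ that (i) is $T\pi$-related to $\bar Y$, i.e.\ projects onto $\bar Y$ along $T\pi\colon TM \to T\bar M$, and (ii) is horizontal for the Vilms connection, i.e.\ lies in the kernel of $\omega^C$. It therefore suffices to verify that the two candidate vector fields $(\bar X^h)^C$ and $(\bar X^h)^V$ satisfy these two defining properties with respect to $\bar X^C$ and $\bar X^V$ respectively; uniqueness of the horizontal lift then delivers the two claimed identities.

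First I would check horizontality, which is the purely formal half. By definition of the horizontal lift $h$ of $\omega$, the vector field $\bar X^h$ satisfies $\omega(\bar X^h)=0$. Using the defining identities of the complete lift of a $(1,1)$ tensor field recalled in the Preliminaries, namely $\omega^C(Z^C)=(\omega(Z))^C$ and $\omega^C(Z^V)=(\omega(Z))^V$ for a vector field $Z$ on $M$, applied with $Z=\bar X^h$, one obtains at once
\[
\omega^C\big((\bar X^h)^C\big)=\big(\omega(\bar X^h)\big)^C=0, \qquad \omega^C\big((\bar X^h)^V\big)=\big(\omega(\bar X^h)\big)^V=0 .
\]
Hence both candidates are horizontal for the Vilms connection, which settles property (ii).

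It remains to establish projectability, property (i). The horizontal lift $\bar X^h$ is, by construction, $\pi$-related to $\bar X$. The key input is the functoriality of the complete and vertical lifts under tangent maps: if $Z$ on $M$ is $\pi$-related to $\bar X$ on $\bar M$, then $Z^C$ is $T\pi$-related to $\bar X^C$ and $Z^V$ is $T\pi$-related to $\bar X^V$ (see~\cite{YI}). Taking $Z=\bar X^h$ shows that $(\bar X^h)^C$ projects onto $\bar X^C$ and $(\bar X^h)^V$ projects onto $\bar X^V$, as required. Combining the two steps and invoking uniqueness yields $\big(\bar X^C\big)^H=\big(\bar X^h\big)^C$ and $\big(\bar X^V\big)^H=\big(\bar X^h\big)^V$. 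I expect the main obstacle to be this projectability step: one must phrase the ``projects onto'' condition precisely for the $TG$-bundle $T\pi$ and match it with the $T\pi$-relatedness supplied by the lift functoriality. As an alternative that avoids any external appeal, one could verify projectability directly in the adapted frame, writing $\bar X=\bar X^i\,\partial/\partial x^i$ so that $\bar X^h=\bar X^i X_i$, and expanding the lifts through $(fZ)^C=f^C Z^V+f^V Z^C$ and $(fZ)^V=f^V Z^V$ together with $T\pi(X_i)=\partial/\partial x^i$; this is more computational but entirely self-contained.
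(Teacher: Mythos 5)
Your proof is correct, but it takes a different route from the paper's. The paper argues in the adapted frame: it notes that vector fields on $T(M/G)$ are functional combinations of the coordinate fields $\partial/\partial x^i$ and $\partial/\partial\dot{x}^i$, checks from the frame expression of $\omega^C$ that $\bigl(\partial/\partial x^i\bigr)^H=X_i^C$ and $\bigl(\partial/\partial\dot{x}^i\bigr)^H=X_i^V$, and deduces the claim from there --- essentially the ``more computational but self-contained'' alternative you sketch in your last sentence. You instead argue intrinsically, characterizing $\bar{Y}^H$ as the unique field that is $T\pi$-related to $\bar{Y}$ and lies in $\ker\omega^C$, and verifying both properties for the candidates: horizontality follows immediately from the tensor-lift identities $\omega^C\bigl(Z^C\bigr)=(\omega(Z))^C$ and $\omega^C\bigl(Z^V\bigr)=(\omega(Z))^V$ of the Preliminaries, while projectability rests on the lemma that $Z^C$ is $T\pi$-related to $\bar{X}^C$ (and $Z^V$ to $\bar{X}^V$) whenever $Z$ is $\pi$-related to $\bar{X}$. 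That lemma is exactly the fact the paper itself invokes later, in the proof of Proposition~\ref{pro:main} (``$X^C$ and $Y^C$ are $T\rho$-related if and only if $X$ and $Y$ are $\rho$-related,'' provable via flows), so your argument imports nothing beyond what the paper already uses. What each approach buys: yours is coordinate-free, makes the two defining properties of the horizontal lift explicit, and generalizes verbatim beyond the principal-bundle setting; the paper's is shorter given that the frame formulas for $\omega^C$ on $\bigl\{X_i^C,X_i^V,\tilde{E}_a^C,\tilde{E}_a^V\bigr\}$ were already tabulated. One small point of care in your version, which you rightly flag: ``projects onto'' for vector fields on the $TG$-bundle $T\pi$ means $T(T\pi)\circ W=\bar{Y}\circ T\pi$, i.e.\ $T\pi$-relatedness in the standard sense, and that is precisely what the flow argument delivers, so the step is sound.
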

\begin{proof}
The horizontal lift of the Vilms connection maps a vector f\/ield $\bar Y$ on $T(M/G)$ onto a~vector f\/ield ${\bar Y}^H$ on~$TM$. Vector f\/ields on $T(M/G)$ are functional combinations of the coordinate vector f\/ields $\partial/\partial x^i$ and $\partial/\partial {\dot x}^i$. One may easily check that their horizontal lifts to vector f\/ields on~$TM$, for the Vilms connection, are
\begin{gather*}
\left(\fpd{}{x^i}\right)^H = X_i^C , \qquad \left(\fpd{}{{\dot x}^i}\right)^H= X_i^V.
\end{gather*}
From this, one can deduce the statement in the proposition.
\end{proof}

If we are given a SODE on $M/G$, $\bar\Gamma = {\dot x}^i\partial/\partial {x}^i+ f^i\partial/\partial {\dot x}^i$, its horizontal lift is the vector f\/ield on $TM$ given by
\begin{gather*}
{\bar\Gamma}^H = {\dot x}^i X_i^C+ f^i X_i^V.
\end{gather*}
Since ${\mathcal L}_{{\tilde E}_a^{\{C,V\}}}\omega^C=0$, we have $\big[{\tilde E}_a^{\{C,V\}}, {\bar\Gamma}^H \big]=0$, which expresses that ${\bar\Gamma}^H$ is $TG$-invariant. Its $TG$-reduced vector f\/ield is, of course, $\bar\Gamma$. However, ${\bar\Gamma}^H$ is {\em not} a SODE on~$M$: Essentially we are missing the term ``$v^a{\tilde E}^C_a$''. To understand the part we are missing, we need to invoke a second connection.

The second connection is the ``vertical lift'' of $\omega$. By this we do not mean the $(1,1)$ tensor f\/ield $\omega^V$ we had def\/ined above. We now need the interpretation of the connection on $\pi$ as the connection map $\varpi\colon TM \to \la$.
\begin{definition}
 The vertical connection of a principal connection $\varpi$ on $\pi\colon M \to \bar M$ is the $G$-principal connection on $\pi^{TM}\colon TM \to (TM)/G$ whose connection map $\varpi^V\colon TTM \to \la$ is given by $\varpi^V=\tau_M^*\varpi$, with $\tau_M\colon TM\to M$.
\end{definition}
For the corresponding interpretation of $\varpi^V$ as a $(1,1)$-form $\Omega$ on $TM$ one may calculate that
\begin{gather*}
\Omega\big(X^C_i\big)=0, \qquad \Omega\big(X^V_i\big) = 0, \qquad \Omega\big({\tilde E}^C_a\big) = {\tilde E}^C_a, \qquad \Omega\big({\tilde E}^V_a\big)=0.
\end{gather*}
It may easily be verif\/ied that this def\/ines a principal connection on the $G$-bundle $\pi^{TM}$: Among other properties, ${\mathcal L}_{{\tilde E}_a^C}\Omega = 0$. The tensor f\/ield $\Omega$ clearly dif\/fers from the tensor f\/ield $\omega^V$, since, for example, $ \Omega\big({\tilde E}^C_a\big) = {\tilde E}^C_a$, but $\omega^V\big({\tilde E}^C_a\big) = {\tilde E}^V_a$.

Assume that $\Gamma$ is a $G$-invariant SODE which reduces to the vector f\/ield $\hat\Gamma$ on $(TM)/G$. The vertical lift connection $\varpi^V$ can be used to reconstruct the integral curve $\gamma$ (through~$v_0$) of~$\Gamma$, starting from the integral curve $\hat \gamma$ (through $\pi^{TM}(v_0)$) of $\hat\Gamma$. This procedure is in fact valid for any principal bundle $N\to N/G$ with a given principal connection, not necessarily $N=TM$ (see, e.g., \cite[Proposition~1]{MC}, where it is explained in detail). Let ${\hat \gamma}^{\rm Hor}$ be the horizontal lifted curve (by means of the principal connection $\varpi^V$) of $\hat \gamma$ through~$v_0$. It is def\/ined by the properties that it remains horizontal everywhere, that it goes through $v_0$ and that it projects on $\hat \gamma$. Let~$\theta$ be the Maurer--Cartan form on~$G$, i.e., $\theta(v_g) = TL_{g^{-1}}v_g$ for $v_g\in T_gG$. The reconstruction theorem states that if one solves the equation $\theta(\dot g) = \varpi^V\big(\Gamma\circ {\hat \gamma}^{\rm Hor}\big)$ (with $g(0)=e$) for a curve~$g(t)$ in~$G$, then the relation between the integral curves is given by $\gamma(t) = \Phi^{TM}_{g(t)} {\hat \gamma}^{\rm Hor}(t)$. If we write the SODE $\Gamma$ as before as $\Gamma = v^i X_i^C + v^a {\tilde E}^C_a + F^i X_i^V + F^a {\tilde E}^V_a$, then the reconstruction equation is of the form $(\theta(\dot g))^a = v^a$, where $v^a$ are the vertical quasi-velocities of ${\hat \gamma}^{\rm Hor}$. It is clear that, in this respect, the reconstruction equation is essentially related to the term $v^a {\tilde E}^C_a$ of the SODE~$\Gamma$. In what follows, we will often make use of this vector f\/ield, and we will denote it by~$X_\omega$.

\begin{proposition}\label{pro:Xw}
 Let $\Gamma_0$ be an arbitrary SODE on $M$. The vector field $X_\omega = \Omega(\Gamma_0)$ on $TM$ is independent of the choice of~$\Gamma_0$. It is $T\pi$-vertical, $G$-invariant, but not $TG$-invariant.
\end{proposition}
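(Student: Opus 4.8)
The plan is to compute $X_\omega=\Omega(\Gamma_0)$ explicitly in the frame $\{X_i,\tilde E_a\}$ and then read off all three properties from the bracket relations and the property list collected in the Preliminaries. Writing an arbitrary SODE as $\Gamma_0=v^iX_i^C+v^a\tilde E_a^C+F_0^iX_i^V+F_0^a\tilde E_a^V$ and using the linearity of $\Omega$ together with its characterization $\Omega(X_i^C)=\Omega(X_i^V)=\Omega(\tilde E_a^V)=0$ and $\Omega(\tilde E_a^C)=\tilde E_a^C$, I would obtain at once
\[
X_\omega=v^a\tilde E_a^C .
\]
For the independence from $\Gamma_0$, the point to stress is that the coefficients $F_0^i,F_0^a$ of the vertical-lift terms have dropped out, so $X_\omega$ depends only on the common ``spray part'' $v^iX_i^C+v^a\tilde E_a^C$. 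A clean frame-free way to say this: any two SODEs differ by a $\tau_M$-vertical vector field, since $T\tau_M\circ\Gamma=\id$ forces their difference into $\ker T\tau_M$, and $\Omega$ annihilates the whole $\tau_M$-vertical distribution (it is spanned by the vertical lifts $X_i^V,\tilde E_a^V$, all killed by $\Omega$). Hence $\Omega(\Gamma_0)$ is the same for every choice.

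Next I would dispatch the two easy properties. The vector fields $\tilde E_a^C=(E_a,0)_{TM}$ are fundamental vector fields of the $TG$-action, and the fibres of $T\pi$ are exactly the $TG$-orbits, so each $\tilde E_a^C$, and therefore $X_\omega=v^a\tilde E_a^C$, is tangent to those fibres; equivalently $\omega^C(X_\omega)=v^a\omega^C(\tilde E_a^C)=v^a\tilde E_a^C=X_\omega$, placing $X_\omega$ in the image of the Vilms vertical projector, which is the $T\pi$-vertical distribution. For $G$-invariance, since $G$ is connected it suffices to verify $[\tilde E_a^C,X_\omega]=0$. Expanding by the Leibniz rule and using $\tilde E_a^C(v^b)=C_{ac}^bv^c$ together with $[\tilde E_a^C,\tilde E_b^C]=[\tilde E_a,\tilde E_b]^C=-C_{ab}^c\tilde E_c^C$, the two resulting terms cancel after relabelling, by antisymmetry of the structure constants. (Alternatively, choosing $\Gamma_0$ to be a $G$-invariant SODE and invoking ${\mathcal L}_{\tilde E_a^C}\Omega=0$ gives ${\mathcal L}_{\tilde E_a^C}X_\omega=0$ directly.)

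Finally, for the failure of $TG$-invariance, the remaining fundamental vector fields are the $\tilde E_a^V$, and a short computation using $\tilde E_a^V(v^b)=\delta_a^b$ and $[\tilde E_a^V,\tilde E_b^C]=[\tilde E_a,\tilde E_b]^V=-C_{ab}^c\tilde E_c^V$ yields
\[
[\tilde E_a^V,X_\omega]=\tilde E_a^C-v^bC_{ab}^c\tilde E_c^V .
\]
The leading term $\tilde E_a^C$ is linearly independent of the $\tilde E_c^V$ and is nowhere zero because the action is free, so this bracket never vanishes, exactly mirroring the obstruction ``$-\tilde E_a^C$'' that already prevents any SODE from being $TG$-invariant. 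I do not expect a genuine obstacle here: the whole statement is essentially bookkeeping with the lift identities. The one conceptual step worth isolating is the independence claim, whose slick justification is the observation that $\Omega$ kills the $\tau_M$-vertical distribution while two SODEs differ precisely by such a field; everything else reduces to the property list.
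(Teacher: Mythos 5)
Your proposal is correct and takes essentially the same route as the paper's own proof: both compute $X_\omega=v^a{\tilde E}^C_a$ in the frame $\{X_i,{\tilde E}_a\}$ and establish the invariance claims via the same two bracket computations, $\big[{\tilde E}^C_b,X_\omega\big]=0$ and $\big[{\tilde E}^V_a,X_\omega\big]={\tilde E}^C_a-v^bC^c_{ab}{\tilde E}^V_c$, with your frame-free gloss on the independence claim (two SODEs differ by a $\tau_M$-vertical field, which $\Omega$ annihilates) being a nice but equivalent reformulation of the paper's one-line argument. One cosmetic remark: the cancellation in $\big[{\tilde E}^C_a,X_\omega\big]$ is pure index relabelling and does not actually invoke antisymmetry of the structure constants.
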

 \begin{proof}
Since $\Gamma_0$ must take the form $v^a {\tilde E}^C_a + \cdots $, the vector f\/ield $\Omega(\Gamma_0) = v^a {\tilde E}^C_a$ is independent of the choice of the SODE $\Gamma_0$. It is clearly $T\pi$-vertical since $T\pi\circ\Omega=0$. It is also easy to see that this vector f\/ield is $G$-invariant, but not $TG$-invariant. Indeed, we have
\begin{gather*}
\big[X_\omega,{\tilde E}^C_b\big] = v^a \big[{\tilde E}^C_a,{\tilde E}^C_b\big] - {\tilde E}_b^C (v^a) {\tilde E}^C_a = 0,
 \end{gather*}
 but, on the other hand,
 \begin{gather*}
\big[X_\omega,{\tilde E}^V_b\big] = v^a \big[{\tilde E}^C_a,{\tilde E}^V_b\big] - {\tilde E}_b^V (v^a) {\tilde E}^C_a = -\big(v^a C^c_{ab}{\tilde E}^V_c + {\tilde E}^C_b\big).\tag*{\qed}
\end{gather*}\renewcommand{\qed}{}
\end{proof}

From its expression in quasi-velocities, it can be seen that a pointwise def\/inition for the vector f\/ield $X_\omega$ is $X_\omega(v) = \widetilde{(\varpi(v))}^C(v)$, for all $v\in TM$.

\section{Un-reducing second-order systems} \label{sectionsode}

We start with some general considerations for an arbitrary principal bundle $\mu\colon P\to P/G$.
\begin{definition}
A vector f\/ield $X\in\vectorfields{P}$ is an un-reduction of a vector f\/ield $\bar X \in\vectorfields{P/G}$ if all its integral curves project on those of $\bar X$.
\end{definition}
This is actually equivalent with saying that $X$ and $\bar X$ are $\mu$-related, since for any integral curve $c(t)$ of $X$ the property in the def\/inition means that $(\mu\circ c)(t)=(\bar c\circ\mu)(t)$, which, after dif\/ferentiating, becomes
\begin{gather*}
T\mu \circ X = \bar X \circ \mu.
\end{gather*}
For each principal connection $\varpi$ on $\mu$, the horizontal lift ${\bar X}^h$ is clearly an un-reduction. This shows that if we f\/ix a principal connection, any un-reduction $X$ is of the type $X=\bar X^h + W$, where $W$ may be any $\mu$-vertical vector f\/ield on $P$.

We now turn to the case of interest, where $\mu=T\pi$ and $P=TM$ with structure group $TG$. For any vector f\/ield $\bar X$ on $T(M/G)$, the Vilms connnection $\omega^C$ on $T\pi$ generates an un-reduc\-tion~${\bar X}^H$. We will be interested in the case where either $\bar X$ or $X$ is a SODE.

\begin{proposition} Let $\bar \Gamma$ be a vector field on $T(M/G)$, and let $\Gamma$ be an un-reduction of $\bar\Gamma$ on $TM$.
\begin{enumerate}\itemsep=0pt
\item[$(1)$] If $\Gamma$ is a SODE on $M$, then $\bar \Gamma$ is a SODE on $M/G$.
\item[$(2)$] If $\bar \Gamma$ is a SODE on $M/G$, then $\Gamma$ is a SODE on $M$ if, and only if, $\varpi^V \circ \Gamma=\varpi$.
\item[$(3)$] If $\bar \Gamma$ is a SODE on $M/G$, then $\Gamma$ is a SODE on $M$ if, and only if, $\Omega(\Gamma)=X_\omega$.
\end{enumerate}
\end{proposition}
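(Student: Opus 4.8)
The plan is to reduce everything to the intrinsic characterisation of a SODE: $\Gamma$ is a SODE on $M$ precisely when $T\tau_M\circ\Gamma=\id_{TM}$, and likewise $\bar\Gamma$ is a SODE on $\bar M=M/G$ precisely when $T\tau_{\bar M}\circ\bar\Gamma=\id_{T\bar M}$, where $\tau_{\bar M}\colon T\bar M\to\bar M$. The hypothesis that $\Gamma$ un-reduces $\bar\Gamma$ is, as noted in the excerpt, the $T\pi$-relatedness $T(T\pi)\circ\Gamma=\bar\Gamma\circ T\pi$. The two remaining ingredients are the commuting square $\pi\circ\tau_M=\tau_{\bar M}\circ T\pi$ (the statement that $T\pi$ covers $\pi$), whose tangent map gives $T\pi\circ T\tau_M=T\tau_{\bar M}\circ T(T\pi)$, and the identity $\varpi^V=\varpi\circ T\tau_M$ coming from $\varpi^V=\tau_M^*\varpi$.

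For item $(1)$ I would simply chase the diagram: composing the differentiated square with $\Gamma$ gives
\[
T\pi\circ(T\tau_M\circ\Gamma)=T\tau_{\bar M}\circ T(T\pi)\circ\Gamma=T\tau_{\bar M}\circ\bar\Gamma\circ T\pi,
\]
where the second equality uses relatedness. If $\Gamma$ is a SODE the left-hand side is $T\pi$, so $T\tau_{\bar M}\circ\bar\Gamma\circ T\pi=T\pi$; since $T\pi$ is a surjective submersion it may be cancelled on the right, yielding $T\tau_{\bar M}\circ\bar\Gamma=\id_{T\bar M}$, i.e.\ $\bar\Gamma$ is a SODE. I will keep the displayed identity, which holds whether or not $\Gamma$ is a SODE, for reuse below.

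For item $(2)$, write $\Psi:=T\tau_M\circ\Gamma\colon TM\to TM$ and note $\tau_M\circ\Psi=\tau_M$, so $\Psi(v_m)\in T_mM$. The forward implication is immediate: if $\Gamma$ is a SODE then $\Psi=\id_{TM}$, whence $\varpi^V\circ\Gamma=\varpi\circ\Psi=\varpi$. For the converse, assume $\bar\Gamma$ is a SODE and $\varpi^V\circ\Gamma=\varpi$. Feeding $T\tau_{\bar M}\circ\bar\Gamma=\id_{T\bar M}$ into the displayed identity above gives $T\pi\circ\Psi=T\pi$, so for each $v_m$ the vector $\Psi(v_m)-v_m$ lies in $\ker(T_m\pi)$, i.e.\ it is $\pi$-vertical. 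The hypothesis reads $\varpi\circ\Psi=\varpi$, hence $\varpi(\Psi(v_m)-v_m)=0$; but on the vertical subspace the connection form $\varpi$ is a linear isomorphism onto $\la$ (the action is free), so $\Psi(v_m)=v_m$ for all $v_m$, that is $\Gamma$ is a SODE. The crux is precisely this last step: matching the $T\pi$-image pins down the horizontal part and matching $\varpi$ pins down the vertical part, and together they force $\Psi=\id_{TM}$.

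Finally, item $(3)$ I would deduce from $(2)$ by showing the two conditions coincide. Using the relation between a connection map and its vertical projector, $\Omega(W)(v)=\widetilde{(\varpi^V(W))}{}^C(v)$ (the exact analogue of $\omega(X)(m)=\widetilde{\varpi(X(m))}(m)$, now for the fundamental vector fields $\tilde\eta^C$ of $\Phi^{TM}$), together with the pointwise formula $X_\omega(v)=\widetilde{(\varpi(v))}{}^C(v)$, the equation $\Omega(\Gamma)=X_\omega$ becomes $\widetilde{(\varpi^V(\Gamma(v)))}{}^C(v)=\widetilde{(\varpi(v))}{}^C(v)$ for all $v$. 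Since $\eta\mapsto\tilde\eta^C(v)$ is injective (its $\partial/\partial x^A$-part is $\tilde\eta(\tau_M(v))$, and the action is free), this is equivalent to $\varpi^V\circ\Gamma=\varpi$, so item $(3)$ is just a restatement of item $(2)$. I do not expect any genuine obstacle here beyond bookkeeping; the only point needing care throughout is keeping straight the two different lifts of $\varpi$ (the tensor $\omega^V$ versus $\Omega$, equivalently $\varpi^V$), so as not to confuse $\Omega(\tilde E_a^C)=\tilde E_a^C$ with $\omega^V(\tilde E_a^C)=\tilde E_a^V$.
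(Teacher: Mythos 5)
Your proposal is correct and takes essentially the same approach as the paper: item $(1)$ is the same diagram chase, item $(2)$ pins down the horizontal part via $T\pi$ and the vertical part via $\varpi$ exactly as in the paper (your ``$\Psi(v_m)-v_m$ is vertical and killed by $\varpi$'' is just a repackaging of the paper's horizontal--vertical decomposition of $w_m=T\tau_M(\Gamma(v_m))$), and item $(3)$ translates $\varpi^V\circ\Gamma=\varpi$ through the correspondence $\eta\mapsto\tilde\eta^C$ just as the paper does. Your explicit injectivity remark in $(3)$ merely makes precise a step the paper leaves implicit.
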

\begin{proof}
(1) The f\/irst statement follows from the commutative diagrams below. If $\Gamma$ is an un-reduction and a SODE it satisf\/ies $TT\pi\circ \Gamma=\bar \Gamma$ and $T\tau_M\circ \Gamma=\Id_{TM}$. After applying $T\pi$ on the last relation, we get $T (\pi\circ \tau_M) \circ \Gamma =\Id_{T(M/G)}$ and, from the diagram, the left hand side becomes $T\tau_{M_G}\circ TT\pi\circ \Gamma= T\tau_{M_G}\circ \bar \Gamma$, from which it follows that $\bar \Gamma$ is a SODE.
$$
\includegraphics{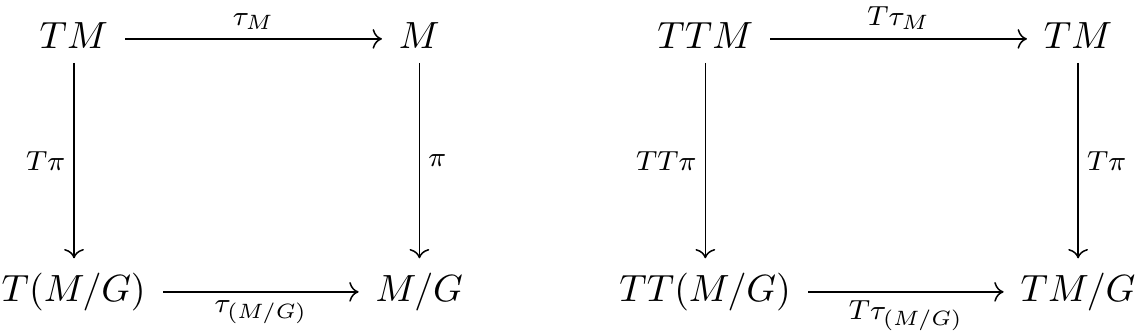}
$$

(2) The condition can be rewritten as $\varpi\circ T\tau_M \circ \Gamma=\varpi$. If $\Gamma$ is a SODE then $T\tau_M\circ \Gamma = \Id_{TM}$ and the statement follows. Assume now that $\varpi\circ T\tau_M \circ \Gamma=\varpi$. We may use the principal connection to decompose each $w_m \in T_mM$ into its horizontal and vertical part as $ (m,T\pi(w_m))^h+ (\varpi(w_m))_M(m)$. For $w_m = T\tau_M(\Gamma(v_m))$ we get, in view of the assumption that~$\Gamma$ is an un-reduction of a SODE $\bar \Gamma$ on $M/G$,
\begin{gather*}
w_m = (m,T(\pi\circ \tau_M)(\Gamma(v_m))))^h+ (\varpi(T\tau_M(\Gamma(v_m))))_M(m)\\
\hphantom{w_m}{} = (m,T(\tau_{M/G} \circ T\pi)(\Gamma(v_m))))^h + (\varpi(v_m))_M(m) \\
\hphantom{w_m}{} = (m,T(\tau_{M/G}) (\bar \Gamma(T\pi(v_m) )))^h + (\varpi(v_m))_M(m) = (m,T\pi(v_m))^h + (\varpi(v_m))_M(m).
\end{gather*}
From this it follows that $T\tau_M(\Gamma(v_m)) =w_m=v_m$, and that $\Gamma$ is a SODE on $M$.

(3) The relation between the connection map $\varpi$ of a connection, and its vertical projec\-tion~$\omega$ is given by $\omega(X)(m) = \widetilde{\varpi(X(m))}(m)$. In the case of the vertical connection this becomes
 \begin{gather*}
 \Omega(\Gamma)(v_m) = \big(\widetilde{\varpi^V(\Gamma(v_m))}\big)^C(v_m).
 \end{gather*}
If $\Gamma$ is an un-reduction of a SODE $\bar\Gamma$ then it will be SODE as well, in view of~(2), if and only if $\Omega(\Gamma)(v_m) = \big(\widetilde{\varpi(v_m)}\big)^C(v_m) = X_\omega(v_m)$.
\end{proof}

We may now introduce the following def\/inition.
 \begin{definition}
Let $\bar\Gamma$ be a SODE on $\bar M$ and $\omega$ be a principal connection on $\pi$. The primary un-reduced SODE of $(\bar\Gamma,\omega)$ is the SODE $\Gamma_1 = {\bar\Gamma}^H +X_\omega$ on $M$.
\end{definition}

In the frame $\big\{X_i,{\tilde E}_a\big\}$, $\Gamma_1$ takes the form $\Gamma_1= {\bar\Gamma}^H+ \Gamma_\omega = v^i X_i^C + f^i X_i^V + v^a {\tilde E}^C_a + 0 {\tilde E}^V_a $. The reason for calling $\Gamma_1$ an ``un-reduced'' SODE is given in the third observation below.
\begin{proposition}\label{pro:main}
Let $\bar\Gamma$ be a SODE on $\bar M$ and $\omega$ be a principal connection on $\pi$.
\begin{enumerate}\itemsep=0pt
\item[$(1)$] The primary un-reduced SODE $\Gamma_1$ of $(\bar\Gamma,\omega)$ is a $G$-invariant vector field on $TM$, but is not $TG$-invariant.
\item[$(2)$] The SODE $\Gamma_1$ is the unique un-reduced SODE on $M$ that satisfies $\omega^C(\Gamma) =X_\omega$.
\item[$(3)$] The base integral curves $c$ of $\Gamma_1$ project, via $\pi\colon M \to \bar M$, on base integral curves $\bar c$ of $\bar\Gamma$.
\item[$(4)$] $\Gamma_1$ is the unique SODE that projects on $\bar\Gamma$ and that has the property that it is tangent to the horizontal distribution of $\omega$.
\end{enumerate}
\end{proposition}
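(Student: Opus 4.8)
The plan is to compute throughout in the adapted frame $\{X_i,\tilde E_a\}$, where $\Gamma_1=v^iX_i^C+f^iX_i^V+v^a\tilde E_a^C$, and to reduce each claim either to a Lie-bracket computation against the fundamental fields $\tilde E_a^C,\tilde E_a^V$ or to the action of the lifted frame on the quasi-velocities $(v^i,v^a)$, both of which are tabulated in the Preliminaries. The structural fact that organises everything is a normal form for un-reductions: since $\bar\Gamma^H$ is $T\pi$-related to $\bar\Gamma$ and the $T\pi$-vertical distribution is spanned by $\{\tilde E_a^C,\tilde E_a^V\}$, any un-reduction of $\bar\Gamma$ differs from $\bar\Gamma^H$ by a combination of these two families; demanding that it be a SODE then forces the coefficient of $\tilde E_a^C$ to equal the quasi-velocity $v^a$, so every un-reduced SODE has the shape $\Gamma=\Gamma_1+F^a\tilde E_a^V$ for some functions $F^a$ on $TM$. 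I would establish this decomposition first, as both uniqueness statements fall out of it.

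For (1) I would write $\Gamma_1=\bar\Gamma^H+X_\omega$ and bracket against the fundamental fields. The $G$-invariance $[\Gamma_1,\tilde E_a^C]=0$ is immediate from the $TG$-invariance $[\bar\Gamma^H,\tilde E_a^C]=0$ recorded in Section~\ref{sectionlifted} together with $[X_\omega,\tilde E_a^C]=0$ from Proposition~\ref{pro:Xw}. The failure of $TG$-invariance is detected on $\tilde E_b^V$: since $[\bar\Gamma^H,\tilde E_b^V]=0$ but $[X_\omega,\tilde E_b^V]=-(v^aC^c_{ab}\tilde E_c^V+\tilde E_b^C)$, the surviving $\tilde E_b^C$ term can never vanish. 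For (2) I would first verify $\omega^C(\Gamma_1)=X_\omega$ directly from $\omega^C(X_i^C)=\omega^C(X_i^V)=0$ and $\omega^C(\tilde E_a^C)=\tilde E_a^C$, $\omega^C(\tilde E_a^V)=\tilde E_a^V$; applying $\omega^C$ to the normal form gives $\omega^C(\Gamma)=X_\omega+F^a\tilde E_a^V$, so the requirement $\omega^C(\Gamma)=X_\omega$ forces $F^a=0$, hence $\Gamma=\Gamma_1$.

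Claim (3) is a diagram chase on the naturality square $\tau_{\bar M}\circ T\pi=\pi\circ\tau_M$. As $X_\omega$ is $T\pi$-vertical, $\Gamma_1$ is $T\pi$-related to $\bar\Gamma$, so an integral curve $\gamma=\dot c$ of $\Gamma_1$ maps under $T\pi$ to an integral curve $T\pi\circ\gamma$ of $\bar\Gamma$; because $\bar\Gamma$ is a SODE this is $\dot{\bar c}$ for some base integral curve $\bar c$, and $\pi\circ c=\pi\circ\tau_M\circ\gamma=\tau_{\bar M}\circ T\pi\circ\gamma=\bar c$. The heart of (4) is the computation
\begin{gather*}
\Gamma_1(v^a)=v^iX_i^C(v^a)+v^b\tilde E_b^C(v^a)=-R^a_{ij}v^iv^j+C^a_{bc}v^bv^c=0,
\end{gather*}
which vanishes \emph{identically} because $R^a_{ij}$ and $C^a_{bc}$ are antisymmetric in their lower indices while $v^iv^j$ and $v^bv^c$ are symmetric. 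Since the $v^a$ are the defining functions of the horizontal distribution $\mathcal H=\{v^a=0\}$, this shows $\Gamma_1$ is tangent to $\mathcal H$; for uniqueness the normal form gives $\Gamma(v^a)=\Gamma_1(v^a)+F^a=F^a$, so tangency forces $F^a=0$ and $\Gamma=\Gamma_1$.

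The step I expect to require the most care is the uniqueness in (4). Tangency to $\mathcal H$ read as tangency to the \emph{submanifold} $\{v^a=0\}\subset TM$ only constrains $\Gamma(v^a)$ along $\mathcal H$, i.e.\ $F^a|_{\mathcal H}=0$, whereas determining $\Gamma$ on all of $TM$ needs $F^a\equiv0$; the clean reading is that $\Gamma$ be tangent to every level set $\{v^a=\mathrm{const}\}$ (equivalently, that the vertical quasi-velocities be conserved along $\Gamma$), with $\mathcal H$ the zero level. Under that reading $\Gamma_1(v^a)=0$ delivers both existence and, through $\Gamma(v^a)=F^a$, uniqueness. The only other pitfall is bookkeeping — keeping the complete and vertical lifts apart and the two actions ($G$ versus $TG$) straight across the bracket computations.
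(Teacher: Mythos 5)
Your proposal is correct and follows essentially the same route as the paper's proof: bracketing against $\tilde E_a^{C}$ and $\tilde E_a^{V}$ for (1), the action of $\omega^C$ on the frame together with the normal form $\Gamma=\Gamma_1+F^a\tilde E_a^V$ for (2) (this normal form is exactly the decomposition the paper records just after the proposition), $T\pi$-relatedness plus the lifted-curve property for (3), and the quasi-velocity computation $\Gamma_1(v^a)=-R^a_{ij}v^iv^j+C^a_{bc}v^bv^c=0$ for (4). Your closing caveat about (4) is well taken and consistent with the paper, whose proof likewise reads ``tangent to the horizontal distribution'' as the identity $\Gamma(v^b)=0$ on all of $TM$ (it shows this is equivalent to $F^a\equiv 0$), i.e.\ the conserved-quasi-velocity reading you propose, rather than tangency to the submanifold $\{v^a=0\}$ alone, which would only force $F^a$ to vanish there.
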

\begin{proof}
The f\/irst observation follows from the fact that ${\bar\Gamma}^H$ is $TG$-invariant, and therefore $\big[\Gamma_1, {\tilde \xi}^{\{C,V\}}\big] = \big[X_\omega, {\tilde \xi}^{\{C,V\}} \big]$. The second observation follows from the fact that the vector f\/ields~$X^C$ and~$Y^C$ are $T\rho$-related if and only if $X$ and $Y$ are $\rho$-related. A~similar observation holds for vertical lifts. One can easily prove this by considering the f\/lows of the involved vector f\/ields. In our case, we know that ${\bar\Gamma}^H$ and $\bar\Gamma$ are $T\pi$-related by def\/inition, and that ${\tilde E}_a$ is $\pi$-related to zero, whence~$X_\omega$ is $T\pi$-related to zero as well.

The observation that the SODE $\Gamma_1$ is $T\pi$-related to the SODE $\bar\Gamma$ means that its integral curves $\gamma$ in~$TM$ project, via $T\pi$, on integral curves $\bar\gamma$ of $\bar\Gamma$ in $T\bar M$. Since both vector f\/ields are SODEs, their integral curves are lifted curves. We conclude from this that the base integral curves $c$ of $\Gamma_1$ project, via $\pi\colon M \to \bar M$, on base integral curves~$\bar c$ of $\bar\Gamma$.

Horizontal vectors, expressed in quasi-velocities as $v_m = v^iX_i(m)+v^a{\tilde E}_a^C(m)$, have the property that $v^a = 0$. Let's write $\Gamma_1$ for now as $\Gamma_1 = v^i X_i^C + f^i X_i^V + v^a {\tilde E}^C_a + F^a {\tilde E}^V_a$. For the last property we need to show that $\Gamma_1(v^b) = 0$ is equivalent with $F^a=0$. This follows easily from the properties $X^C_i(v^b) = - R^b_{ij}v^j$, $X^V_i(v^a) =0$, ${\tilde E}^C_a(v^b) = C^b_{ac}v^c$ and ${\tilde E}^V_a(v^b) = \delta^b_a$.
\end{proof}

Integral curves of $\bar\Gamma$ satisfy $\ddot x^i = f^i(x,\dot x)$. For $\Gamma_1= {\bar\Gamma}^H+ X_\omega = v^i X_i^C + f^i X_i^V + v^a {\tilde E}^C_a + 0 {\tilde E}^V_a $, the integral curves satisfy, among other, $\ddot x^i = f^i(x,\dot x)$ and $\dot v^a=0$. The remaining equation is the ``reconstruction equation'' $(\theta(\dot g))^a=v^a$, where $\theta$ is the Maurer--Cartan form (see the paragraph above Proposition~\ref{pro:Xw}).

From the f\/irst statement in Proposition~\ref{pro:main}, the one which says that $\Gamma_1$ is $G$-invariant, we may conclude that $\Gamma_1$ can be reduced to a vector f\/ield on $(TM)/G$. This vector f\/ield is, obviously, not the same as the SODE $\bar\Gamma$, which is a vector f\/ield on $T(M/G)$. Vector f\/ields on $(TM)/G$ play, however, an essential role in so-called Lagrange--Poincar\'e reduction. The Euler--Lagrange equations of a $G$-invariant regular Lagrangian produce such a $G$-invariant SODE on $M$, and the integral curves of its reduced vector f\/ield on~$(TM)/G$ satisfy what are called the Lagrange--Poincar\'e equations. In this sense, one may think of the un-reduction of~\cite{unreduction}, which heavily relies on Lagrange--Poincar\'e reduction, as a process that compares the reduced vector f\/ield on $(TM)/G$ with the vector f\/ield $\bar\Gamma$ on $T(M/G)$.

Let $\bar c$ be a curve in $M/G$ which goes at $t=0$ through a point $\bar m$. Consider a point $m\in M$ with $\pi(m)=\bar m$. The horizontal lift of $\bar c$ at $m$ is the unique curve ${\bar c}^h_m$ which is def\/ined by the following three properties: (1) it projects on $\bar c$ for all $t$, (2) it satisf\/ies ${\bar c}^h_m(0)=m$, (3) it has only horizontal tangent vectors ${\dot{\bar c}}^h_m(t)$.

\begin{proposition} \label{propprim2}
The base integral curves of the primary un-reduced SODE $\Gamma_1$ of $(\bar\Gamma,\omega)$ through horizontal initial vectors are horizontal lifts of base integral curves of $\bar\Gamma$.
\end{proposition}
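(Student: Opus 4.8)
The plan is to check that, for a horizontal initial vector $v_0\in T_mM$ (i.e.\ $\omega(v_0)=0$), the base integral curve $c$ of $\Gamma_1$ with $\dot c(0)=v_0$ satisfies the three defining properties of the horizontal lift $\bar c^h_m$ recalled just before the statement. Property~(2), namely $c(0)=m$, is immediate from the choice of initial data. Property~(1), that $c$ projects onto a base integral curve $\bar c$ of $\bar\Gamma$, is precisely Proposition~\ref{pro:main}(3). Hence everything reduces to property~(3): that $\dot c(t)$ stays horizontal for all $t$.

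For property~(3) I would invoke Proposition~\ref{pro:main}(4). Viewing the horizontal distribution of $\omega$ as the submanifold $\mathrm{Hor}=\{w\in TM:\omega(w)=0\}$ of $TM$, that proposition says $\Gamma_1$ is tangent to $\mathrm{Hor}$. By the elementary fact that an integral curve of a vector field tangent to a submanifold, and issuing from a point of that submanifold, remains on it, the integral curve $\gamma=\dot c$ of $\Gamma_1$ through the horizontal vector $v_0$ stays in $\mathrm{Hor}$; thus $\dot c(t)$ is horizontal for every $t$, which is property~(3).

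Equivalently, one may argue in coordinates. In the frame $\{X_i,\tilde E_a\}$ one has $\Gamma_1 = v^i X_i^C + f^i X_i^V + v^a \tilde E_a^C + 0\,\tilde E_a^V$, so the vertical coefficient $F^a$ vanishes and the integral curves obey $\dot v^a=0$; therefore $v^a(t)=v^a(0)$. A horizontal initial vector means $v^a(0)=0$, whence $v^a(t)\equiv 0$, and since a tangent vector is horizontal exactly when its components $v^a$ vanish, $\dot c(t)$ is horizontal throughout. Combined with Proposition~\ref{pro:main}(3), this identifies $c$ with $\bar c^h_m$, where $\bar c$ is a base integral curve of $\bar\Gamma$.

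The argument is short precisely because the substantive work was already carried out in Proposition~\ref{pro:main}. The only delicate point is the bookkeeping that identifies the horizontal distribution of $\omega$ on $M$ with the submanifold $\{v^a=0\}$ of $TM$ and lets one read off property~(3) either from the tangency in Proposition~\ref{pro:main}(4) or, in the coordinate approach, from the conservation law $\dot v^a=0$; once these identifications are in place, no genuinely new obstacle arises.
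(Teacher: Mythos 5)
Your proposal is correct and follows essentially the same route as the paper: the paper's proof likewise verifies the three defining properties of the horizontal lift, obtaining horizontality from the conservation law $\dot v^a=0$ with horizontal initial data $v^a(0)=0$ (your coordinate variant is literally this argument, and your tangency variant via Proposition~\ref{pro:main}(4) is just a repackaging of the same fact $\Gamma_1(v^b)=0$), together with Proposition~\ref{pro:main}(3) for the projection property. The only cosmetic difference is that the paper additionally remarks that the reconstruction equation reduces to $\theta(\dot g)=0$, so $g(t)$ stays constant, which your argument does not need.
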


\begin{proof} Consider an initial value $\bar m$ in $M/G$ and an initial tangent vector $\bar v=v^i_0 \partial/\partial x^i|_{\bar m}$, and their corresponding base integral curve $\bar c_{\bar m,\bar v}(t)$ of $\bar\Gamma$. Let $m\in M$ be such that $\pi(m)=\bar m$, and consider the vector $v:=h(m,\bar v)$, the $\omega$-horizontal lift of $\bar v$ to $m$. In the frame $\{X_i, {\tilde E}_a\}$, we may write $v=v^i_0 X_i(m) +0 {\tilde E}_a(m)$. With the initial values $(m,v)$, the base integral curve $c_{m,v}(t)$ of $\Gamma$ satisf\/ies $\pi(c_{m,v}(t)) = \bar c_{\bar m,\bar v}(t)$ and also $v^a(t)=0$ (with this particular initial value). The solution of the remaining equation is then $\theta(\dot g)=0$, from which we see that $g(t)$ remains constant.

From the fact that $v^a(t)=0$, we conclude that the velocity ${\dot{\bar c}}^h_m(t)$ remains horizontal throughout. Given that also (1) and (2) are satisf\/ied, we may conclude that $c_{m,v}= ({\bar c}_{\bar m,\bar v})^h_m$. \end{proof}

If we had taken a dif\/ferent initial condition, say $v^a(0)=\xi^a$, we would get $v^a(t)=\xi^a$, and the corresponding curve $c$ would not be a horizontal lift.

From this proposition, we may derive an ``un-reduction algorithm''. Consider a SODE~$\bar\Gamma$ on~$\bar M$, a principal connection on $M\to \bar M$ and their corresponding ``lifted'' primary un-reduced SODE $\Gamma_1 =X_\omega +(\bar\Gamma)^H$. Calculate its integral curves~$c(t)$ in~$M$, but only for the very specif\/ic initial values we gave above: the initial velocity has to be horizontal. We have just shown that these curves are in fact the horizontal lifts of the curves $\bar c$ that we really want to know. If we project them down, we get the desired curves $\bar c$. Note that any f\/ield of initial velocities works as well: we could compute the integral curves of $\Gamma_1$ for \emph{any} set of initial conditions, and then project them in view of Proposition~\ref{pro:main}.

Other than the primary un-reduced SODE, there are many more SODEs that have the property that (some of) their integral curves project onto those of $\bar\Gamma$. All other un-reduction SODEs are of the form $\Gamma_2 =\Gamma_1 + V$, where $V$ is $T\tau$-vertical and $T\pi$-vertical. Since $\Gamma_1$, $X_i^V$ and~${\tilde E}_a^V$ are $T\pi$-related to $\bar\Gamma$, $\partial/\partial {\dot x}^i$ and $0$, respectively, this means that $V$ may be any vector f\/ield on~$TM$ of the type $V= V^a {\tilde E}_a^V$, which means that it is a vector f\/ield with $\Omega(V)=0$.
\begin{proposition}\label{prop0}
Let $\bar\Gamma$ be a SODE on $M/G$. Any vector field $\Gamma_2 = \Gamma_1 + V$, where $V$ is $T\pi$-vertical and such that $\Omega(V) = 0$, is a SODE which has the property that its base integral curves project on those of $\bar\Gamma$.
\end{proposition}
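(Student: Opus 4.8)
The plan is to first pin down the form of $V$ in the frame $\{X_i,\tilde E_a\}$ using the two hypotheses, and then to verify the two assertions---that $\Gamma_2$ is a SODE, and that its base integral curves project---by reducing each to a property already established for $\Gamma_1$. Being $T\pi$-vertical, $V$ lies in the span of the fundamental vector fields $\tilde E_a^C$ and $\tilde E_a^V$ of the $TG$-action, so $V = A^a\tilde E_a^C + B^a\tilde E_a^V$ for some functions $A^a,B^a$ on $TM$. Since $\Omega(\tilde E_a^C)=\tilde E_a^C$ and $\Omega(\tilde E_a^V)=0$, the hypothesis $\Omega(V)=0$ forces $A^a=0$, whence $V = B^a\tilde E_a^V$. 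This is the key structural step: the condition $\Omega(V)=0$ is precisely what strips any $\tilde E_a^C$-component from $V$.

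With $V = B^a\tilde E_a^V$, the SODE claim is immediate. As a vertical lift, $\tilde E_a^V$ is $\tau_M$-vertical, so $T\tau_M\circ V=0$ and hence $T\tau_M\circ\Gamma_2 = T\tau_M\circ\Gamma_1 = \Id_{TM}$, which is the defining characterization of a SODE. Equivalently, in the frame one has $\Gamma_2 = v^i X_i^C + v^a\tilde E_a^C + f^i X_i^V + B^a\tilde E_a^V$, whose complete-lift coefficients are still the quasi-velocities $(v^i,v^a)$, as required of a SODE in this frame.

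For the projection of base integral curves I would argue by $T\pi$-relatedness, exactly as in part~(3) of Proposition~\ref{pro:main}. By construction $\Gamma_1$ is $T\pi$-related to $\bar\Gamma$, while $V$, being $T\pi$-vertical, is $T\pi$-related to the zero vector field on $T\bar M$; hence $\Gamma_2=\Gamma_1+V$ is $T\pi$-related to $\bar\Gamma$. The integral curves of $\Gamma_2$ therefore project under $T\pi$ onto those of $\bar\Gamma$, and since both vector fields are SODEs their integral curves are lifted curves, so the base integral curves of $\Gamma_2$ project under $\pi$ onto base integral curves of $\bar\Gamma$. I do not anticipate a genuine obstacle: the only point requiring care is recognizing that, given $T\pi$-verticality, the condition $\Omega(V)=0$ is exactly what guarantees $\tau_M$-verticality of $V$, so that $\Gamma_2$ remains a SODE while its $T\pi$-projection is left unchanged.
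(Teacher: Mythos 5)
Your proof is correct and follows essentially the same route as the paper: the paper's justification (given in the paragraph preceding the proposition) likewise notes that a $T\pi$-vertical field lies in the span of $\tilde E_a^C$ and $\tilde E_a^V$, that $\Omega(V)=0$ strips the $\tilde E_a^C$-component so $V=V^a\tilde E_a^V$, and then invokes $T\pi$-relatedness of $\Gamma_1$, $X_i^V$ and $\tilde E_a^V$ to $\bar\Gamma$, $\partial/\partial\dot x^i$ and $0$ exactly as in part $(3)$ of Proposition~\ref{pro:main}. Your closing remark correctly identifies the role of the two hypotheses: $T\pi$-verticality keeps the projection fixed, while $\Omega(V)=0$ is equivalent (given verticality) to $\tau_M$-verticality, which preserves the SODE property.
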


The full expression of $\Gamma_2$ is $\Gamma_2= v^i X_i^C + f^i X_i^V + v^a {\tilde E}^C_a + V^a {\tilde E}^V_a $. Its integral curves satisfy $\ddot x^i = f^i$ and $\dot v^a=V^a$.

If we only consider vector f\/ields $\Gamma_2$ that are $G$-invariant, then, since we know that $\Gamma_1$ is $G$-vertical, the coef\/f\/icients $V^a$ need to satisfy ${\tilde E}^C_a(V^b) = C^b_{ad}V^d$.

With the above, we have characterized all $G$-invariant SODEs $\Gamma_2$ which have the property that {\em all} their integral curves project to those of $\bar\Gamma$.
The systems that appear in \cite{unreduction} therefore belong to the SODEs we have discussed in this section, see Section~\ref{sectionLagrangian} for more details. In Section~\ref{sectionsecond}, we mention a dif\/ferent class of SODEs for which only a subclass of its integral curves project.

\section{A non-Lagrangian example}\label{sectioncanonical}

We will now consider an example where the dynamics is not of Lagrangian nature, and therefore falls out of the scope of \cite{unreduction}. This section intends to clarify some of the concepts we have introduced in the previous ones. First, we recall some well known facts. In general, if $\{Z_A\}$ is a frame on a manifold $M$, with quasi-velocities $v^A$, and if $\nabla$ is a linear af\/f\/ine connection with coef\/f\/icients $\nabla_{Z_A}{Z_B} = \gamma^D_{AB}Z_D$, then its quadratic spray is the SODE given by $v^AZ_A^C - \gamma^D_{AB}v^Av^B Z_D^V$. The base integral curves of this spray satisfy: ${\dot v}^D = - \gamma^D_{AB}v^Av^B$. This construction applies in particular to the case where the connection is the Levi-Civita connection of some (pseudo-)Riemannian metric on $M$.

Consider a connected Lie group $G$, with its Lie algebra $\la$ given by the Lie algebra of left-invariant vector f\/ields. The canonical af\/f\/ine connection on a Lie group $G$ can be def\/ined by its action on left-invariant vector f\/ields:
\begin{gather*}
\nabla_{\zeta_G^L}\eta_G^L =\frac{1}{2}[\zeta,\eta]^L_G,
\end{gather*}
where $\zeta_G^L$, $\eta_G^L$ stand for the left-invariant vector f\/ields of $\zeta,\eta\in\la$.

Among other properties, $\nabla$ has vanishing torsion; see~\cite{N} for a detailed discussion of this and other aspects of $\nabla$. We denote its quadratic spray by $\Gamma_G$. If the Lie-algebra $\la$ is (semi-simple or) compact then its Killing form def\/ines a (pseudo-)Riemannian metric on $G$ for which the canonical connection is its Levi-Civita connection. In other cases, although the canonical spray is a quadratic spray, the canonical connection may not be metrical and, moreover, its spray need not even be variational (see, e.g., \cite{Thompson} for some occurrences of this situation). It is also well-known that if the canonical connection is the Levi-Civita of some left-invariant Riemannian metric, then that metric is necessarily also right-invariant (see~\cite{ONeill}). The base integral curve of~$\Gamma_G$ through~$g$ and $\zeta\in\la$ (when $TG$ is left trivialized as $G\times \la$) is given by $t\mapsto g\exp(t\zeta)$. This is easy to see, as follows.

The connection coef\/f\/icients with respect to the frame $\{(E_A)_G^L\}$ of left-invariant vector f\/ields are $\gamma^D_{AB} = \frac12 C^D_{AB}$, which are skew-symmetric. The spray is then $\Gamma_G=v^A_L\big((E_A)^L_G\big)^C$ (with no $\big((E_A)^L_G\big)^V$ component) and the equation for its base integral curves is ${\dot v}_L^A=0$, and therefore $v^A_L(t)=\zeta^A$, for some constants~$\zeta^A$. Since $v^A_L$ are the quasi-velocities in the frame of left-invariant vector f\/ields, they are precisely the components of $\dot g$ left-translated to the Lie algebra, i.e., we have $g^{-1}\dot g=\zeta$. The solution of $\big(g^{-1}{\dot g}\big)^A =\zeta^A$ with $g(0)=e$ is precisely the one-parameter group of $\zeta$. The rest of the argument relies on the symmetry of~$\Gamma_G$.

Note that, from the expression $\Gamma_G=v^A_L(E_A)_L^C$, we may also write that $\Gamma_G(v_g)= \big(\zeta^L_G\big)^C(v_g)$, where $\zeta = TL_{g^{-1}}v_g = v^A E_A$.

Consider now a (closed) normal subgroup $N$ of $G$, and consider its right action on $G$. Then $K=G/N$ is again a Lie group, where multiplication is given by $(g_1N) (g_2N) = (g_1g_2) N$, where $gN$ denotes the left coset. We will write $\pi\colon G \to K=G/N$ for the projection and $\la$, $\lan$ and $\lak$ for the corresponding Lie algebras.

Since $K$ is a Lie group, it comes with its own canonical connection~$\nabla^K$. We wish to show that the sprays $\Gamma_K$ and $\Gamma_G$ of the two connections are related by means of an un-reduction process. Recall that, if that is the case, we only need to introduce coordinates on $G$ (and $N$) to be able to write down integral curves on~$K$.

To start the un-reduction process we need a principal $N$-connection on $\pi\colon G \to K$. Such a connection is, for example, available if we consider on $\la$ an $\operatorname{Ad}_N$-invariant inner product. (Remark that we do not assume that the inner product is $\operatorname{Ad}_G$-invariant. But, if that were the case, then it would generate a bi-invariant metric on $G$, whose Levi-Civita connection was $\nabla^G$.) The orthogonal complement $\lam$ of $\lan$ with respect to this inner product is then $\operatorname{Ad}_N$-invariant. In ef\/fect, this means that $N$ is reductive. We will write $\la = \lam \oplus \lan$. In these notations $T_e\pi(\la) = \lak$.
Consider the short exact sequence
\begin{gather*}
 0\to \lan\to\la\to\lak\to 0.
\end{gather*}
Its splitting $\lak \to \la$ with image $\lam$ will be denoted by $s$. From the property that $N$ is normal we get that $[\lan,\lan]\subset \lan$ and $[\lan,\lam]\subset\lan$ (i.e., $\lan$ is an ideal of $\la$). From the fact that $\lam$ is $\operatorname{Ad}_N$-invariant, we also get that $[\lan,\lam] \subset\lam$, and therefore $[\lan,\lam]=\{0\}$. Remark that we do not know much about $[\lam,\lam]$ (its vertical part is related to the curvature of the connection we will introduce next).

Let $P_\lam\colon \la\to\lam$ and $P_\lan\colon \la\to\lan$ denote the projections on $\lam$ and $\lan$, respectively. We may associate an $N$-principal connection on $\pi\colon G \to K$ to the decomposition $\la = \lam \oplus \lan$ by means of the connection map $\varpi(v_g) = P_\lan(TL_{g^{-1}}v_g)$. To see that this is a connection, recall that the inf\/initesimal generators of the $N$-action are given by the left-invariant vector f\/ields $\eta_G^L$ on $G$, associated to $\eta\in\lan$. Given that $\operatorname{Ad}_n \circ P_\lan = P_\lan \circ \operatorname{Ad}_n$ (where $n\in N$) we may easily verify that indeed $\varpi({\eta}_G^L(g)) = \eta$, for $\eta\in\lan$, and $\varpi(TR_n v_g) = \operatorname{Ad}_{n^{-1}}(\varpi(v_g))$.

Consider now, for $\xi\in \lak$, the vector f\/ield $\xi^L_K$ on $K$. We will show that its horizontal lift, with respect to the connection $\varpi$, is the left-invariant vector f\/ield $(s\xi)_G^L$. It is easy to see that this vector f\/ield is $N$-invariant, since for all inf\/initesimal generators of the action, $\big[\eta_G^L, (s\xi)_G^L\big] = [\eta, s\xi]_G^L =0$ for all $\eta\in\lan$. Moreover, $(s\xi)_G^L$ projects on $\xi^K_L$, since
\begin{gather*}
T\pi\big((s\xi)_G^L (g)\big) = T\pi (TL_g(s\xi) ) = TL_{gN}((\pi\circ s)(\xi)) = TL_{gN}(\xi) = \xi_L^K(\pi(g)).
\end{gather*}
Finally, $(s\xi)_G^L$ is horizontal, since $\varpi\big( (s\xi)_G^L(g)\big) = P_\lan(TL_{g^{-1}}(TL_g(s\xi))) = P_\lan(s\xi) =0$.

\begin{proposition}
Let $N$ be a normal subgroup of a Lie group $G$ and consider the principal connection $\varpi$ associated to an $\operatorname{Ad}_N$-invariant inner product on $\la$. Then, the canonical spray~$\Gamma_G$ on~$G$ is the primary un-reduced SODE of the canonical spray~$\Gamma_K$ on~$K=G/N$ and the connection~$\varpi$.
\end{proposition}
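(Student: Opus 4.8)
The plan is to verify the claimed identity $\Gamma_G=\Gamma_1:=(\Gamma_K)^H+X_\omega$ by writing each object in the adapted left-invariant frame and comparing term by term. First I would fix a basis $\{E_A\}=\{E_i\}\cup\{E_a\}$ of $\la$ with $\{E_i\}$ spanning $\lam=s(\lak)$ and $\{E_a\}$ spanning $\lan$, and set $\bar E_i:=T_e\pi(E_i)\in\lak$, so that $s\bar E_i=E_i$. The left-invariant frame $\{(E_A)_G^L\}$ on $G$ then coincides with the frame $\{X_i,\tilde{E}_a\}$ used in the general theory: the $\tilde{E}_a=(E_a)_G^L$ are the generators of the right $N$-action, while the horizontal lift of $\xi_K^L$ was shown above to be $(s\xi)_G^L$, so that $X_i=(s\bar E_i)_G^L=(E_i)_G^L$. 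Since the excerpt already establishes that the canonical spray carries no vertical-lift component, I get $\Gamma_G=v^A_L\big((E_A)_G^L\big)^C=v^i_L\,X_i^C+v^a_L\,\tilde{E}_a^C$, where $v^i_L,v^a_L$ are the quasi-velocities in this frame.

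Next I would compute the two summands of $\Gamma_1$ and check that they reassemble into $\Gamma_G$. For the vertical part, the pointwise formula $X_\omega(v_g)=\widetilde{\varpi(v_g)}^C(v_g)$ together with $\varpi(v_g)=P_\lan(TL_{g^{-1}}v_g)$ gives $\varpi(v_g)=v^a_L E_a$, whence $X_\omega=v^a_L\,\tilde{E}_a^C$. For the horizontal part, $\Gamma_K$ is itself a canonical spray, so in the left-invariant frame $\big\{(\bar E_i)_K^L\big\}$ of $K$ it again has no vertical component, $\Gamma_K=\bar v^i_L\,\big((\bar E_i)_K^L\big)^C$. Applying Proposition~\ref{newprop} (so that $(\bar X^C)^H=(\bar X^h)^C$) together with the identity $(\xi_K^L)^h=(s\xi)_G^L$, the horizontal lift of each $\big((\bar E_i)_K^L\big)^C$ is $X_i^C$, and hence $(\Gamma_K)^H=v^i_L\,X_i^C$ once the quasi-velocity matching $\bar v^i_L\circ T\pi=v^i_L$ is in hand. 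Adding the two pieces yields $\Gamma_1=v^i_L\,X_i^C+v^a_L\,\tilde{E}_a^C=\Gamma_G$, as desired.

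The only genuinely delicate bookkeeping is this quasi-velocity matching, which follows because $\pi$ is a group homomorphism: for $w_g=TL_g(v^i_LE_i+v^a_LE_a)$ one has $TL_{\pi(g)^{-1}}T\pi(w_g)=v^i_L\bar E_i$, since $T_e\pi$ annihilates $\lan$ and sends $E_i$ to $\bar E_i$. A second point worth flagging is a sign subtlety: the general theory is written for a \emph{left} $G$-action, whereas here $N$ acts on the \emph{right}, so the fundamental fields are left-invariant and obey $[\tilde{E}_a,\tilde{E}_b]=+C^c_{ab}\tilde{E}_c$ rather than $-C^c_{ab}\tilde{E}_c$; this flips some intermediate signs but is immaterial to the conclusion. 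Indeed, a slicker route bypasses the frame computation by invoking Proposition~\ref{pro:main}(4): one checks that $\Gamma_G$ is a SODE (being a spray), that it is $T\pi$-related to $\Gamma_K$ (each $(E_A)_G^L$ is $\pi$-related to $(T_e\pi E_A)_K^L$, vanishing for $A$ vertical, and complete lifts of $\pi$-related fields are $T\pi$-related), and that it is tangent to the horizontal distribution of $\omega$, because $\Gamma_G(v^a_L)=-C^a_{BC}\,v^B_L v^C_L=0$ by skew-symmetry of the structure constants. Uniqueness in Proposition~\ref{pro:main}(4) then forces $\Gamma_G=\Gamma_1$.
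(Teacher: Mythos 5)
Your proposal is correct, and its core computation is essentially the paper's own proof in basis form: where you fix an adapted basis $\{E_i\}\cup\{E_a\}$ of $\la=\lam\oplus\lan$ and decompose $\zeta=TL_{g^{-1}}v_g$ as $v_L^iE_i+v_L^aE_a$, the paper works pointwise and basis-free with the projectors $P_\lam$, $P_\lan$: it computes $(\Gamma_K)^H(v_g)=\big((P_\lam\zeta)_G^L\big)^C(v_g)$ via Proposition~\ref{newprop} together with $\big(\xi_K^L\big)^h=(s\xi)_G^L$, notes $X_\omega(v_g)=\big((P_\lan\zeta)_G^L\big)^C(v_g)$, and adds. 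Your ``delicate bookkeeping'' step $\bar v_L^i\circ T\pi=v_L^i$ is exactly what the paper absorbs into the chain $s\big(TL_{k^{-1}}T\pi(v_g)\big)=(s\circ T\pi)\big(L_{g^{-1}}(v_g)\big)=P_\lam\zeta$, using that $\pi$ is a homomorphism, so $\pi\circ L_{g^{-1}}=L_{k^{-1}}\circ\pi$; the two arguments thus match step for step. What is genuinely different is your closing shortcut via the uniqueness statement of Proposition~\ref{pro:main}(4), which the paper does not use: once you know $\Gamma_G$ is a SODE, is $T\pi$-related to $\Gamma_K$ (note this still requires the quasi-velocity matching, so it does not fully escape that bookkeeping), and is tangent to the horizontal distribution because $\Gamma_G\big(v_L^a\big)=-\tfrac12 C^a_{BC}v_L^Bv_L^C=0$ (you dropped the factor $\tfrac12$ coming from $\gamma^D_{AB}=\tfrac12 C^D_{AB}$, harmlessly), uniqueness forces $\Gamma_G=\Gamma_1$ without ever computing $(\Gamma_K)^H$ explicitly. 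Your remark that the right $N$-action flips the sign in $[\tilde E_a,\tilde E_b]$ relative to the left-action conventions of the general theory is apt and, as you say, immaterial here --- the paper silently works in this right-action setting, where the fundamental vector fields are the left-invariant fields $\eta^L_G$, $\eta\in\lan$.
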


\begin{proof}
We f\/irst derive an expression for the Vilms-horizontal lift of the spray $\Gamma_K$. Consider a given $v_g\in T_gG$, and let $v_k=T\pi(v_g)\in T_{\pi(g)}K$. We will also use the notations $\zeta = TL_{g^{-1}}v_g$ and $\xi = TL_{k^{-1}}v_k$. By construction,
$\Gamma_K(v_k) = \big(\xi_L^K\big)^C(v_k)$. Since the complete lifts on $G$ and $K$ commute with the two connections (on the one hand the Vilms-horizontal lift, and on the other hand the horizontal lift, see Proposition~\ref{newprop}) we easily get that
\begin{gather*}
(\Gamma_K)^H(v_g) = \big(\big((\xi)^L_K\big)^h\big)^C(v_g) = \big((s\xi)_G^L\big)^C(v_g) = \big(\big( s \big(TL_{k^{-1}} T\pi (v_g)\big) \big)_G^L\big)^C(v_g)\\
\hphantom{(\Gamma_K)^H(v_g)}{} = \big(\big( s \big(T\big( \pi \circ L_{g^{-1}}\big) (v_g)\big) \big)_G^L\big)^C(v_g) = \big( \big((s \circ T \pi) \big(L_{g^{-1}} (v_g)\big) \big)_G^L\big)^C(v_g)\\
\hphantom{(\Gamma_K)^H(v_g)}{}= \big((P_{\lam} \zeta)_G^L\big)^C(v_g).
\end{gather*}

On the other hand, we have already stated that, in the current notations,
\begin{gather*}
X_\omega(v_g) = \big((\varpi(v_g))_G^L\big)^C(v_g) = \big(( P_{\lan} \zeta )_G^L\big)^C(v_g).
\end{gather*} Together, we get
\begin{gather*}
(\Gamma_K)^H(v_g) + X_\omega(v_g) = \big((P_{\lam} \zeta )_G^L\big)^C(v_g)+ \big(( P_{\lan} \zeta )_G^L\big)^C(v_g) = \big((\zeta )_G^L\big)^C(v_g),
\end{gather*}
which is exactly $\Gamma_G(v_g)$.
\end{proof}

{\bf Example: The general linear group as a bundle over $\R$.} In the set $M_{n\times n}(\R)$ of $n\times n$ real matrices we consider the Lie groups
\begin{gather*}
{\rm GL}^+(n)=\{A\in M_{n\times n}(\R)\colon \det(A)>0 \},\\
{\rm SL}(n)=\{A\in M_{n\times n}(\R)\colon \det(A)=1 \}.
\end{gather*}
The group ${\rm SL}(n)$ is a normal subgroup of~${\rm GL}(n)^+$. This follows from the fact that it is the kernel of the Lie group homomorphism $\det\colon {\rm GL}^+\to \R^+$, where $\R^+$ is the multiplicative group of positive real numbers. In particular, the determinant induces an isomorphism:
\begin{gather*}
\varphi\colon \ {\rm GL}^+(n)/{\rm SL}(n)\to \R^+.
\end{gather*}
The situation is summarized in the following diagram:
$$
\includegraphics{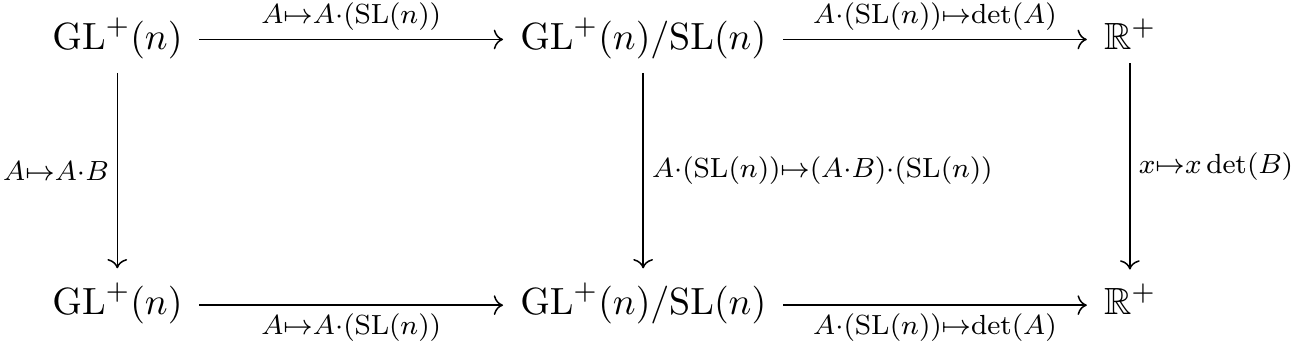}
$$
We can therefore think of ${\rm GL}^+(n)$ as a principal ${\rm SL}(n)$-bundle over $\R^+$, where ${\rm SL}(n)$ acts on the right on ${\rm GL}^+(n)$ and with the projection given by $\pi(A)=\det(A)$. We also observe that the tangent of the projection~$\pi$ at~$\Id$ (the $n\times n$ identity matrix) is $T_{\Id}\pi=\operatorname{trace}$.

The Lie algebra of ${\rm GL}^+(n)$ is $\gl(n)$ and the Lie algebra $\sli(n)$ of ${\rm SL}(n)$ consists of the real $n\times n$ traceless matrices. We consider the inner product on $\gl(n)$ given by $\langle A,B\rangle_{\gl(n)}=\operatorname{trace}\big(AB^{-1}\big)$. Since the trace is invariant under conjugation, $\langle \cdot,\cdot\rangle_{\gl(n)}$ def\/ines and $\operatorname{Ad}_{{\rm SL}(n)}$-invariant inner product on $\gl(n)$. Note that the identity matrix $\Id$ is orthogonal to $\sli(n)$, and therefore $\langle \cdot,\cdot\rangle_{\gl(n)}$ induces the splitting $\gl(n)=\sli(n)\oplus \langle \Id \rangle$. The associated map $s\colon \R\to \gl(n)$ is given by $s(\lambda)=\big(\frac{\lambda}{n}\big) \Id$. The situation is as follows:
$$
\includegraphics{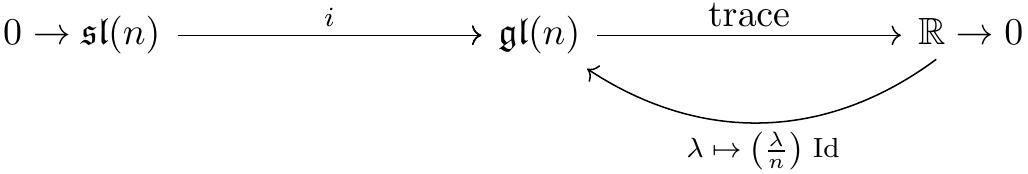}
$$
The horizontal space of the connection $\varpi$ is therefore given by left translation of the identity. A vector $v_A$ in $T_A({\rm GL}^+(n))$ is horizontal if it is of form $v_A=\mu A$, where $\mu\in\R$ (we identify the tangent space to a vector space with the vector space itself).
The horizontal lift of the tangent vector $\lambda \in T_x\R^+$ to $A\in\pi^{-1}(x)$ is given by the vector $v_A=\big(\frac{\lambda}{xn}\big) A$. Indeed, this vector is horizontal and it projects back, since $\pi\circ L_{A^{-1}} = L_{x^{-1}}\circ \pi$, and thus
\begin{gather*}
T\pi(v_A)= TL_x\circ T\pi\big(A^{-1}v_A\big)= x\operatorname{trace}\big(A^{-1}v_A\big)= \lambda.
\end{gather*}
We will use $x$ for the coordinate on $\R^+$. When written in terms of vector f\/ields, we may see from the above that, e.g., the horizontal lift of the coordinate vector f\/ield $\partial/\partial x$ on $\R^+$ is the vector f\/ield def\/ined by the map $X\colon {\rm GL}^+(n) \to {\rm GL}^+(n)\colon A \mapsto \frac{1}{n \det A}A$. We may also write $X=\frac{1}{nx} \id$, where $\id$ stands for the vector f\/ield given by the identity map on~${\rm GL}^+(n)$.

We will write $\bar\Gamma$ and $\Gamma$ for the canonical sprays on $\R^+$ and ${\rm GL}^+(n)$, respectively. The element $1\in\R$ is a basis for the Lie algebra of $\R^+$. Its left-invariant vector f\/ield on $\R^+$ is then $x \partial/\partial x$. If we use natural coordinates $(x,\dot x)\in T\R^+$, the quasi-velocity with respect to this vector f\/ield is then $w=\dot x/x$. Therefore, the canonical spray $\bar\Gamma$ on $\R^+$ is
\begin{gather*}
\bar\Gamma= w \left( x\fpd{}{x} \right)^C = \dot x \fpd{}{x} + \frac{{\dot x}^2}{x} \fpd{}{\dot x}.
\end{gather*}
Due to the properties in Proposition~\ref{newprop}, its Vilms horizontal lift is
\begin{gather*}
\bar\Gamma^H= \dot x\left(\left(\fpd{}{x}\right)^h\right)^C + \frac{{\dot x}^2}{x} \left(\left(\fpd{}{x}\right)^h\right)^V = \dot x X^C + \frac{{\dot x}^2}{x} X^V.
\end{gather*}
But, $X=\frac{1}{nx} \id$. Given that in general $(fY)^C = f Y^C + \dot f Y^V$, we may also write
\begin{gather*}
\bar\Gamma^H= \frac{\dot x}{nx} (\id)^C - \frac{{\dot x}^2}{nx^2} (\id)^V + \frac{{\dot x}^2}{nx^2} (\id)^V = \frac{\dot x}{nx} (\id)^C.
\end{gather*}

Consider now a basis $\{E_a\}\cup\Id$ of $\gl(n)$ ($a=1,\dots,n^2-1$). It is clear that the fundamental vector f\/ield $(\Id)_{{\rm GL}^+(n)}$ is simply the vector f\/ield $\id$ on ${\rm GL}^+(n)$. If $\{ v^a,v\}$ denote the quasi-velocities w.r.t.\ the frame $\big\{{\tilde E}_a,\id\big\}$ then we have that $v=\frac{\dot x}{nx}$. In the spray $\Gamma$,
\begin{gather*}
\Gamma =v^a \tilde{E}_a^C + v(\id)^C,
\end{gather*}
we recognize therefore in the f\/irst term the vector f\/ield $X_{\omega}$, and in the last term the Vilms horizontal lift $\bar\Gamma^H$.

\section{Un-reducing Lagrangian systems}\label{sectionLagrangian}

We now specialize to the case where the SODE $\bar\Gamma$ is the Lagrangian vector f\/ield of a regular Lagrangian $\ell$ (a smooth function on $T\bar M$). The vector f\/ield $\bar\Gamma$ on $T\bar M$ is then completely determined by two facts: (1) it is a SODE, (2) it satisf\/ies
\begin{gather*}
\bar\Gamma\left(\fpd{\ell}{\dot x^i}\right) - \fpd{\ell}{x^i}=0.
\end{gather*}
Written in coordinates, this characterizations says that if we write, in general, $\bar\Gamma = X^i \partial/\partial x^i + f^i \partial/\partial {\dot x}^i$ for a vector f\/ield on $T\bar M$, then, from (1) we know that $X^i ={\dot x}^i$, and from (2) we get that $f^i$ is determined by the Euler--Lagrange equations, when written in normal form. We will translate property (2) into one that the primary un-reduced SODE $\Gamma_1=X_\omega + {\bar\Gamma}^H$ satisf\/ies.

First, we recall the following observations for vector f\/ields on a principal $K$-bundle $p\colon Q\to Q/K$. When a vector f\/ield $W$ on $Q$ is $K$-invariant, the relation
$
\bar{W} \circ p=Tp \circ W
$ uniquely def\/ines its reduced vector f\/ield $\bar{W}$ on $Q/K$. Likewise, if $F\colon Q \to \mathbb{R}$ is a $K$-invariant function on $Q$ it can be reduced to a function $f\colon Q/K \to \mathbb{R}$ with
$
f \circ p\, = \, F$. The relation between these objects can easily be seen to be
\begin{equation*}
W(F) \, =\, W(f \circ p)\, = \, \bar{W}(f) \circ p,
\end{equation*}
which says that the function $\bar{W}(f)$ is the reduced function on $Q /K$ of the invariant function $W(F)$ on $Q$. We will use this for the case where $Q=TM$ and $K=TG$, so that $Q/K=T\bar M$ and $p=T\pi$.

Given $\ell$, we may def\/ine a function $L^H$ on $TM$ by $L^H(v) := \ell(T\pi(v))$. This function is not a regular Lagrangian on~$M$. But it is a $TG$-invariant function, which means that ${\tilde E}_a^C\big(L^H\big) = 0 = {\tilde E}_a^V\big(L^H\big)$. Its $TG$-reduced function is~$\ell$. The vector f\/ields $X_i^C$, $X_i^V$, $\bar\Gamma^H$ are all $TG$-invariant (since $\big[{\tilde E}_a^{\{C,V\}},X_i^C\big]=0$ and similar for~$X_i^V$ and $\bar\Gamma^H$), and their $TG$-reduced vector f\/ields on $\bar M$ are $\partial/\partial x^i$, $\partial/\partial {\dot x}^i$ and $\bar\Gamma$, respectively. From the relation above, we get that
\begin{gather*}
X_i^C\big(L^H\big) = \fpd{\ell}{{x}^i} \circ T\pi,\qquad
X_i^V\big(L^H\big) = \fpd{\ell}{{\dot x}^i} \circ T\pi, \qquad
\end{gather*}
and therefore, if $\bar\Gamma$ satisf\/ies the Euler--Lagrange equation, we obtain that ${\bar\Gamma}^H$ satisf\/ies:
\begin{gather*}
{\bar\Gamma}^H \big(X_i^V\big(L^H\big)\big) - X_i^C\big(L^H\big) =0.
\end{gather*}
We also know that $X_i^V\big(L^H\big)$ is a $G$-invariant function (this follows from $\big[{\tilde E}^C_a,X_i^V\big]\big(L^H\big)=0$), and therefore $X_\omega\big(X_i^V\big(L^H\big)\big) =0$. So, we may also write
\begin{gather*}
\Gamma_1\big(X_i^V\big(L^H\big)\big) - X_i^C\big(L^H\big) =0,
\end{gather*}
or, if ${\bar X}^h = {\bar X}^iX_i$ is an arbitrary $\omega$-horizontal lift of a vector f\/ield ${\bar X} = {\bar X}^i \partial/\partial x^i$ on~$\bar M$,
\begin{gather*}
\Gamma_1\big(\big({\bar X}^h\big)^V\big(L^H\big)\big) - \big({\bar X}^h\big)^C\big(L^H\big) =0,
\end{gather*}
or, equivalently,
\begin{gather*}
\Gamma_1\big(\big({\bar X}^V\big)^H\big(L^H\big)\big) - \big({\bar X}^C\big)^H\big(L^H\big) =0,
\end{gather*}
where $H$ now stands for the horizontal lift with respect to the Vilms connection~$\omega^C$.

The above equation completely determines the coef\/f\/icients $f^i$ in $\Gamma_1 = v^a{\tilde E}_a^C + v^i X_i^C + f^i X_i^V + 0 {\tilde E}_a^V$. We may thus conclude that the primary un-reduced SODE of a couple $(\ell,\omega)$ is the unique SODE that satisf\/ies any of the above equivalent expressions, and also $\omega^C(\Gamma_1)=X_\omega$ (This last property ensures that the coef\/f\/icient in~${\tilde E}_a^V$ is zero. See also Proposition~\ref{pro:main}).

Remark that if $\Gamma_1$ satisf\/ies this equation, then so does $\Gamma_2=\Gamma_1 +V$, where $V$ is any vector f\/ield of the type $V^a{\tilde E}_a^V$, since
 \begin{gather*}
 \Gamma_2\big(X_i^V\big(L^H\big)\big) = \Gamma_1\big(X_i^V\big(L^H\big)\big) + V^a {\tilde E}_a^V \big(X_i^V\big(L^H\big)\big) = \Gamma_1\big(X_i^V\big(L^H\big)\big) + V^a X_i^V \big({\tilde E}_a^V\big(L^H\big)\big) \\
 \hphantom{\Gamma_2\big(X_i^V\big(L^H\big)\big)}{}= \Gamma_1\big(X_i^V\big(L^H\big)\big) ,
 \end{gather*}
 because $\big[{\tilde E}_a,X_i\big] = 0$ and ${\tilde E}_a^V\big(L^H\big)=0$. In light of Proposition~\ref{prop0}, we conclude:
\begin{proposition}\label{prop1}
 Let $\ell$ be a regular Lagrangian on $\bar M$ and $\bar\Gamma$ its corresponding SODE. Let $\omega$ be a~principal connection on $\pi\colon M\to\bar M$. Any vector field $\Gamma_2$ on $M$ for which
\begin{itemize}\itemsep=0pt
\item[$(a)$] $\Gamma_2-\Gamma_1$ is $T\pi$-vertical,
\item[$(b)$] $\Omega(\Gamma_2-\Gamma_1)=0$,
\item[$(c)$] $\Gamma_2$ satisfies $\Gamma_2\big(\big({\bar X}^h\big)^V\big(L^H\big)\big) - \big({\bar X}^h\big)^C\big(L^H\big) =0$, for any vector field ${\bar X}$ on~$\bar M$,
\end{itemize} has the property that its base integral curves project on solutions of the Euler--Lagrange equations of~$\ell$.
\end{proposition}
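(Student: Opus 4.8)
The plan is to reduce everything to Proposition~\ref{prop0} together with the defining property of the Lagrangian SODE. The three hypotheses split naturally: $(a)$ and $(b)$ are structural constraints on the difference $V := \Gamma_2 - \Gamma_1$, while $(c)$ is the intrinsic, Lagrangian condition. The first task is to show that $(a)$ and $(b)$ together force $V$ to be of the very special form $V^a{\tilde E}_a^V$.

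First I would recall that the $T\pi$-vertical distribution on $TM$ is spanned by the fundamental vector fields of the $TG$-action, namely ${\tilde E}_a^C$ and ${\tilde E}_a^V$. Hence condition $(a)$ lets us write $V = V^a{\tilde E}_a^C + W^a{\tilde E}_a^V$ for functions $V^a,W^a$ on $TM$. Applying $\Omega$ and using $\Omega({\tilde E}_a^C) = {\tilde E}_a^C$ and $\Omega({\tilde E}_a^V)=0$, condition $(b)$ gives $\Omega(V) = V^a{\tilde E}_a^C = 0$, whence $V^a=0$ and $V = W^a{\tilde E}_a^V$. Thus $\Gamma_2 = \Gamma_1 + W^a{\tilde E}_a^V$ is precisely of the form treated in Proposition~\ref{prop0}, with $V$ being $T\pi$-vertical and $\Omega(V)=0$.

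Next I would invoke Proposition~\ref{prop0} directly: it tells us that such a $\Gamma_2$ is again a SODE and that its base integral curves project, via $\pi$, onto those of $\bar\Gamma$. Since by hypothesis $\bar\Gamma$ is the Lagrangian vector field of the regular Lagrangian $\ell$, its base integral curves are exactly the solutions of the Euler--Lagrange equations of $\ell$. Composing the two statements yields the desired conclusion: the base integral curves of $\Gamma_2$ project onto solutions of the Euler--Lagrange equations of~$\ell$.

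Finally, regarding condition $(c)$: it is not an extra restriction but the intrinsic, $\bar\Gamma$-free reformulation of the above. The discussion preceding the statement shows that $\Gamma_1$ satisfies $\Gamma_1\big(\big({\bar X}^h\big)^V\big(L^H\big)\big) - \big({\bar X}^h\big)^C\big(L^H\big) = 0$ exactly when $\bar\Gamma$ obeys the Euler--Lagrange equation, and the displayed computation there shows that adding any term $W^a{\tilde E}_a^V$ leaves this equation intact (because $[{\tilde E}_a,X_i]=0$ and ${\tilde E}_a^V(L^H)=0$). Hence $(c)$ holds automatically for every $\Gamma_2$ produced by $(a)$ and $(b)$, confirming the consistency of the three conditions. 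The only point requiring care is the identification in the first step---pinning down the $T\pi$-vertical distribution and the action of $\Omega$ on it---after which the result follows formally from Proposition~\ref{prop0}; I do not expect a genuine analytic obstacle.
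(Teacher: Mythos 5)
Your proposal is correct and follows essentially the same route as the paper: conditions $(a)$ and $(b)$ force $\Gamma_2-\Gamma_1$ to be of the form $V^a{\tilde E}_a^V$ (since the $T\pi$-vertical distribution is spanned by ${\tilde E}_a^C$, ${\tilde E}_a^V$ and $\Omega$ kills exactly the vertical-lift part), after which Proposition~\ref{prop0} gives that base integral curves project onto those of $\bar\Gamma$, i.e., onto solutions of the Euler--Lagrange equations of $\ell$. Your observation that $(c)$ is then automatic rather than an independent restriction matches the paper's own displayed computation (using $\big[{\tilde E}_a,X_i\big]=0$ and ${\tilde E}_a^V\big(L^H\big)=0$), the condition being retained there as the intrinsic Lagrangian characterization of the coefficients $f^i$.
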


Remark that the statement does not require the use of a specif\/ic vertical equation, as in~\cite{unreduction}. Each choice of $\Gamma_2$ leads to a~dif\/ferent vertical equation. When written in quasi-velocities, we get that $\Gamma_2$ is of the form $\Gamma_2= v^i X_i^C + v^a {\tilde E}^C_a+ f^i X_i^V +V^a {\tilde E}^V_a$. As noted above, the f\/irst two terms indicate that $\Gamma_2$ is a SODE, and the coef\/f\/icients $f^i$ in the third one are completely determined by the Euler--Lagrange equations of $\ell$. The only freedom left is therefore the choice of the coef\/f\/icients $V^a$ of the fourth term, which represents the choice of a vertical equation. In the next section we discuss an example where a specif\/ic choice for that freedom is naturally available. But the advantage of our approach is the same as the one that is claimed in \cite{unreduction}. All the conditions can be checked on the level of the manifold $M$, and we may do so in any coordinates on $M$, not necessarily those that are adapted to $M\to\bar M$.

\section{Curvature distortion}\label{sectionvertical}

Proposition~\ref{prop1} showed that there are many un-reductions $\Gamma_2$ of the same~$\bar\Gamma$. The best choice for~$\Gamma_2$ may depend on the specif\/ic example one considers. In some situations, there is a natural choice. This natural choice may lead to the introduction of what was called ``coupling distortion'' and ``curvature distortion'' in~\cite{unreduction}. We will concentrate here on the latter. To see its relation to the concepts we have introduced above, we may immediately restrict ourselves to the case of a~quadratic Lagrangian, without much loss of generality.

We consider again a principal f\/ibre bundle $M\to \bar M=M/G$, but now also a metric~$\bar g$ on~$\bar M$ and its corresponding geodesic spray ${\bar \Gamma}_{\bar g}$. The metric def\/ines a quadratic Lagrangian~$\ell$ on~$\bar M$. The goal of this section is to devise a method to obtain geodesics of~$\bar g$, without ever using coordinates on~$\bar M$. Integral curves of ${\bar\Gamma}_{\bar g}$ satisfy
\begin{gather*}
{\ddot x}^i + {\bar\Gamma}^i_{jk} {\dot x}^j{\dot x}^k = 0.
\end{gather*}

The idea of \cite{unreduction} is to use a reasonable construction of a metric $g$ on the un-reduced mani\-fold~$M$ and its geodesic spray~$\Gamma_g$. That spray, however, does not have the property that its geodesics project on those of $\bar g$. For that reason, we need to subtract some ``distortion'' terms from $\Gamma_g$ to get a SODE $\Gamma_2$ which does have that property. In what follows, we wish to obtain a relation between this $\Gamma_2$ and our primary un-reduced SODE~$\Gamma_1$.

The construction of the metric $g$ goes as follows. Let $B\colon \la \times \la\to \mathbb{R}$ be an $\operatorname{Ad}$-invariant symmetric and non-degenerate bilinear form on~$\la$ (i.e., a bi-invariant metric on~$G$). Consider again a principal connection on~$\pi$. Together with $\bar g$ we can form a (quadratic) Lagrangian on~$M$, $L=L^H+L^V$, where $L^H(v)=\ell(T\pi(v))=\frac{1}{2} g_{ij}v^iv^j$ and $L^V (v) = \frac{1}{2}B(\varpi(v),\varpi(v)) = \frac{1}{2}B_{ab}v^av^b $. The coef\/f\/icients $B_{ab}$ are constants that satisfy
\begin{gather*}
B_{ac}C^d_{ac} + B_{bd}C^d_{ac} =0.
\end{gather*}
This construction of a regular Lagrangian $L$ can also be found in \cite{CMR}, and our notations are chosen in such a way that they match with those in \cite{unreduction}.

Let us denote the (pseudo-)metric on $M$, associated to $L$, by $g$. By construction the mechanical connection of this metric is the connection we have started from, i.e., $g_{ai} =0$. The equations for the geodesics of $g$ are the so-called ``Wong equations'' (see again \cite{CMR}). An easy way to obtain these equations goes as follows. The geodesic spray $\Gamma_g$ is the Euler--Lagrange SODE of $L$. As such, it is the SODE determined by
\begin{gather*}
\Gamma_g\big(X^V(L)\big) -X^C(L) =0,
\end{gather*}
for any choice of vector f\/ield $X$ on $M$. If we use $X={\tilde E}_a$, then we know from the invariance of the Lagrangian that ${\tilde E}^C_a(L)=0$. Therefore the corresponding Euler--Lagrange equation is
 \begin{gather*}
 \Gamma_g\big(B_{ab}v^b\big) = 0,
 \end{gather*}
 meaning that the momentum $B_{ab}v^b$ is conserved along geodesics. This is, in essence, the ``vertical'' Wong equation. The ``horizontal'' Wong equation is the Euler--Lagrange equation we get by making use of $X=X_i$,
\begin{gather*}
 \Gamma_g\big(X_i^V(L)\big) -X_i^C(L) = 0.
\end{gather*}

Consider now again the SODE ${\bar\Gamma}_{\bar g}$ on $\bar M$ and its many un-reduced SODEs $\Gamma_2$, given in Proposition~\ref{prop1}. By construction, all these vector f\/ields satisfy
\begin{gather*}
 \Gamma_2\big(X_i^V\big(L^H\big)\big) -X_i^C\big(L^H\big) = 0.
\end{gather*}
Motivated by what we know about $\Gamma_g$, we now choose within that class, the particular SODE~$\Gamma_2$ which has the property that
\begin{gather*}
\Gamma_2(B_{ab}v^b) = 0.
\end{gather*}
It is easy to see that this last property uniquely determines a specif\/ic $\Gamma_2$ within the class of all SODEs that satisfy the conditions of Proposition~\ref{prop1}, since it f\/ixes the value of the coef\/f\/i\-cients~$V^a$ in $\Gamma_2= v^i X_i^C + v^a {\tilde E}^C_a+ f^i X_i^V +V^a {\tilde E}^V_a$. In this case, given that $B_{ab}$ is a non-degenerate constant matrix, we get simply
\begin{gather*}
\Gamma_2\big(v^b\big) = 0,
\end{gather*}
which is exactly the def\/ining relation of the primary un-reduced SODE $\Gamma_1$ within the class of all $\Gamma_2$'s, and thus $V^a=0$. Since $\Gamma_g$ satisf\/ies the same equation, it has the same coef\/f\/icients $V^a=0$, when written in quasi-velocities. The dif\/ference $A = \Gamma_g-\Gamma_1$ is therefore a vector f\/ield of the type $A=A^i X_i^V$. Given that $L=L^H + L^V$, and that $X_i^V\big(L^V\big) =0$, its coef\/f\/icients $A^i$ are completely determined by the dif\/ference between the respective horizontal equations, i.e., by the relation
\begin{gather*}
A \big(X_i^V\big(L^H\big)\big) -X_i^C\big(L^V\big) = 0.
\end{gather*}
If we derive the expressions of the curvature coef\/f\/icients from $[X_i,X_j] = R_{ij}^a {\tilde E}_a$, then $X^C_i(v^b) = - R^b_{ij}v^j$, we get that $X_i^V\big(L^H\big) = A^j{\bar g}_{ij}$ and $X_i^C\big(L^V\big)=-B_{ab}R^a_{ij}v^jv^b$ and thus is
\begin{gather*}A^k = - {\bar g}^{ik}B_{ab}R^a_{ij}v^bv^j.
\end{gather*}

The term $A$ in
\begin{gather*}
\Gamma_1 =\Gamma_g -A
\end{gather*}
is what is called ``curvature distortion'' in \cite{unreduction}. It is the term one needs to subtract from $\Gamma_g$ in order to get a SODE with the property that its base integral curves project on those of ${\bar\Gamma}_{\bar g}$. The (vertical) vector f\/ield $A$ may, of course, also be expressed in a coordinate-free manner, but we will not go into these details here.

As we saw, the ef\/fect of subtracting $A$ to $\Gamma_g$ is that it does not change the term of~$\Gamma_g$ in~${\tilde E}^V_a$, but that from the term in $X_i^V$ it cancels out the curvature term in the right-hand side of the horizontal Wong equation. The point is that both $\Gamma_g$ and $A$, and by the above construction also~$\Gamma_1$, can be computed in any coordinates on $M$. The above procedure is, in essence, the method that is applied in~\cite{unreduction}.

{\bf The f\/ibration of the rotation group over the sphere.} We will illustrate the discussion above by means of the realization of ${\rm SO}(3)$ as a~$\Sph^1$-bundle over the sphere $\Sph^2\subseteq \R^3$. More precisely, for the standard metric on the sphere, we will consider the geodesics of the metric determined by $L=L^H+L^V$ and explicitly compute the ``distortion'' term.

We identify each rotation about the origin of $\R^3$ with an orthogonal matrix. Such a rotation $R\in {\rm SO}(3)$ is determined by three consecutive counterclockwise rotations, def\/ined by the Euler angles $(\psi, \theta, \varphi)$ (following the convention of~\cite{ArnoldBook}):
\begin{gather*}R(\psi, \theta, \varphi) =
\left(\begin{matrix}
\cos \psi & -\sin \psi & 0\\
\sin \psi & \cos \psi & 0\\
 0 & 0 & 1
\end{matrix}\right)
\left(\begin{matrix}
1 & 0 & 0\\
0 & \cos \theta & -\sin \theta\\
0 & \sin \theta & \cos \theta
\end{matrix}\right)
\left(\begin{matrix}
\cos \varphi & -\sin \varphi & 0\\
\sin \varphi & \cos \varphi & 0\\
 0 & 0 & 1
\end{matrix}\right).
\\
\hphantom{R(\psi, \theta, \varphi)}{} =
\left(\!\begin{matrix}
\cos\psi\cos\varphi - \cos \theta\sin\psi\sin\varphi\!\! & -\cos\psi\sin\varphi - \cos \theta\sin\psi\cos\varphi\!\! & \sin\psi\sin \theta\\
\sin\psi\cos\varphi + \cos \theta \cos\psi\sin\varphi\!\! & -\sin\psi\sin\varphi + \cos \theta\cos\psi\cos\varphi\!\! & -\cos\psi\sin\theta\\
\sin\varphi\sin\theta & \cos\varphi\sin\theta & \cos\theta
\end{matrix}\!\right)\!.
\end{gather*}
The group $\Sph^1$ acts on ${\rm SO}(3)$ on the left by rotations about the $z$-axis, namely as:
\begin{gather*}
\big( \alpha\in \Sph^1, R(\psi,\theta,\varphi)\in {\rm SO}(3) \big)\mapsto
\left(\begin{matrix}
\cos \alpha & -\sin \alpha & 0\\
\sin \alpha & \cos \alpha & 0\\
 0 & 0 & 1
\end{matrix}\right)\cdot R(\psi, \theta, \varphi)\in {\rm SO}(3).
\end{gather*}
Note that two elements $R_1,R_2\in {\rm SO}(3)$ are in the same orbit if their last row, which parameterizes a sphere $\Sph^2\subset \R^3$, coincides. This def\/ines a principal $\Sph^1$-bundle $\pi\colon {\rm SO}(3)\to \Sph^2$. In terms of the Euler angles the projection is $\pi(\psi, \theta, \varphi)=(\theta, \varphi)$, and the inf\/initesimal generator (spanning the vertical distribution) is $\tilde E=\partial/\partial\psi$.

In what follows, we will use the principal connection on $\pi\colon {\rm SO}(3)\to \Sph^2$, given by the connection form $\varpi=d\psi+\cos\theta d\varphi$. This is, in fact, the mechanical connection of the invariant Lagrangian on $T({\rm SO}(3))$, given by
\begin{gather*}
\tilde L=\frac{1}{2}\big(\dot\theta^2+\dot\varphi^2\sin^2\theta\big)+\frac{1}{2}\big(\dot\psi+\dot\varphi\cos\theta\big)^2.
\end{gather*}
(For the mechanical connection, we regard $\tilde L$ as a metric and def\/ine the horizontal space of the connection as the space that is orthogonal to the vertical space of~$\pi$.) The Lagran\-gian~$\tilde L$ corresponds to a Lagrange top with equal moments of inertial, see~\cite{ArnoldBook}. For this principal connection, we may give the following basis of horizontal vector f\/ields on~${\rm SO}(3)$:
\begin{gather*}
X_1=\fpd{}{\theta},\qquad X_2=\fpd{}{\varphi}-\cos\theta\fpd{}{\psi}.
\end{gather*}
Quasi-velocities w.r.t.\ the frame $\big\{X_1,X_2,\tilde E\big\}$ will be denoted $\{v^1=\dot\theta, v^2=\dot\varphi,w = \dot\psi + \dot\varphi\cos\theta\}$.

Consider now the standard metric $\bar g=d\theta^2+\sin^2\theta d\phi^2$ on $\Sph^2$. Its geodesic equations are given by the Euler--Lagrange equations of the Lagrangian $\ell=\dot \theta^2+\sin^2 \dot\phi^2$ on $T\Sph^2$, or by the geodesic spray
\begin{gather*}
{\bar\Gamma}_{\bar g} = \dot\theta \fpd{}{\theta} + \dot\varphi \fpd{}{\varphi} +\sin\theta\cos\theta \dot\varphi^2 \fpd{}{\dot\theta} - 2\cot\theta\dot\theta\dot\varphi \fpd{}{\dot\varphi}.
\end{gather*}
Its primary un-reduced SODE is the vector f\/ield on $T({\rm SO}(3))$ given by
\begin{gather*}
\Gamma_1 = \Gamma_\omega + {\bar\Gamma}_{\bar g}^H = \dot\psi \fpd{}{\psi} + \dot\theta \fpd{}{\theta} + \dot\varphi \fpd{}{\varphi} + \sin\theta\cos\theta \dot\varphi^2 \fpd{}{\dot\theta}
- 2\cot\theta\dot\theta\dot\varphi \fpd{}{\dot\varphi} \\
\hphantom{\Gamma_1 = \Gamma_\omega + {\bar\Gamma}_{\bar g}^H =}{} + \big(\sin\theta\dot\theta \dot\varphi + 2\cot\theta\cos\theta \dot\theta\dot\varphi \big)\fpd{}{\dot\psi} .
\end{gather*}

We now contrast this with the procedure of \cite{unreduction}, where the system is un-reduced by subtracting curvature distortion from the Euler--Lagrange equations of a certain $L=L^H+L^V$. A bi-invariant metric on the Lie algebra of~$\Sph^1$ is specif\/ied by a real number~$B$. If we set $L^V=Bw^2$, we may write
\begin{gather*}
L=\dot \theta^2+\sin^2\theta \dot\varphi^2+B \big(\dot\psi + \dot\varphi\cos\theta\big)^2,
\end{gather*}
which determines a metric $g$ on ${\rm SO}(3)$. Its geodesic spray is
\begin{gather*}
\Gamma_g = \dot\psi \fpd{}{\psi} + \dot\theta \fpd{}{\theta} + \dot\varphi \fpd{}{\varphi} + \left(\sin\theta\cos\theta \dot\varphi^2 - B\sin\theta\dot\varphi w\right) \fpd{}{\dot\theta}\\
\hphantom{\Gamma_g =}{} +\left(- 2\cot\theta\dot\theta\dot\varphi + \frac{B}{\sin\theta} \dot\theta w \right)\fpd{}{\dot\varphi} + \big(\sin\theta\dot\theta \dot\varphi + 2\cot\theta\cos\theta \dot\theta\dot\varphi - B\cot\theta\dot\theta w\big)\fpd{}{\dot\psi},
\end{gather*}
where $w$ is short-hand for $\dot\psi + \dot\varphi\cos\theta$. Clearly, $\Gamma_g$ is not yet one of the un-reduced SODEs of~${\bar\Gamma}_{\bar g}$, and curvature distortion is required.

In the previous paragraph, we had concluded that $\Gamma_1$ is the unique un-reduced SODE within the class of vector f\/ields~$\Gamma_2$ which has the property that $\Gamma_1(Bw)=0$. From the expressions above, it follows that the curvature distortion is given by
\begin{gather*}
A=\Gamma_g-\Gamma_1 = -B\sin\theta\dot\phi w \fpd{}{\dot\theta} + \frac{B\dot\theta w}{\sin\theta} \left( \fpd{}{\dot\varphi} - \cos\theta\fpd{}{\dot\psi}\right).
\end{gather*}
This is clearly related to the coef\/f\/icient of the curvature $[X_\theta,X_\varphi]=\sin\theta\tilde E$, by means of the inverse of the metric matrix $({\bar g}_{ij}) = \left(\begin{matrix} 1 &0 \\ 0 &\sin^2\theta\end{matrix}\right)$.

To end this paragraph, we sketch an alternative approach, without invoking coordinates on~$\Sph^2$. We refer the reader to~\cite{ArnoldBook,EKMR} for details omitted here. The Lie algebra~$\so(3)$ of~${\rm SO}(3)$ is given by the set of skew symmetric matrices. It has the following basis
\begin{gather*}
e_1=\begin{pmatrix}
0 & 0 & 0\\
0 & 0 & -1\\
0 & 1 & 0
\end{pmatrix},\qquad
e_2=\begin{pmatrix}
0 & 0 & 1\\
0 & 0 & 0\\
-1 & 0 & 0
\end{pmatrix},\qquad
e_3=\begin{pmatrix}
0 & -1 & 0\\
1 & 0 & 0\\
0 & 0 & 0
\end{pmatrix}.
\end{gather*}
We will identify $\so(3)$ with the Lie algebra $\R^3$ (given by the cross product ``$\times$''):
\begin{gather*}
\R^3\ni\omega=
\begin{pmatrix}
\omega_1\\
\omega_2\\
\omega_3
\end{pmatrix}
\mapsto [\omega]=
\begin{pmatrix}
0 & -\omega_3 & \omega_2\\
\omega_3 & 0 & -\omega_1\\
-\omega_2 & \omega_1 & 0
\end{pmatrix}.
\end{gather*}
The left and right invariant vector f\/ields corresponding to the basis vectors will be denoted by~$(e_i)_\ell$ and~$(e_i)_r$, and we will use $(e^i)_\ell$ and $(e^i)_r$ for the dual basis. Then, on $\so(3)$, one may consider the invariant inner product $\langle A,B\rangle_{\so(3)}=\operatorname{trace}\big(AB^{-1}\big)$ and its associated metric on ${\rm SO}(3)$, for which $\{(e^i)_\ell\}$ and $\{(e^i)_r\}$ are orthonormal moving frames. In terms of the Euler angles, this metric is the one given by the Lagrangian $\tilde L$ we had mentioned before.

Suppose now that $\gamma\in\Sph^2$. If $\pi(R)=\gamma$, then we have $\dot\gamma=T\pi(\dot R)=-[\omega]\gamma=\gamma\times\omega$. The connection form is $\varpi=(e^3)_r$ and its curvature 2-form is the area element on $\Sph^2$. Finally, the horizontal lift of a vector $\dot\gamma$ to $R$ is the element $\dot R$ for which $R^{-1}\dot R=[\dot\gamma\times\gamma]$, see~\cite{EKMR}. One might also identify a rotation with an an oriented orthonormal frame $(v,w,v\times w)$ in $\R^3$ (the rotation needed to move the standard $\R^3$ basis onto the new frame). With this in mind, we pick $\hat e_1,\hat e_2\in\R^3$ such that $\{\hat e_1,\hat e_2,\gamma\}$ is orthonormal. The horizontal lift $v^h$ of a vector $v=(u_1\hat e_1+u_2\hat e_2)$ to an element $R\in {\rm SO}(3)$ is the tangent vector $\dot R$ such that
\begin{gather*}
R^{-1}\dot R=[(u_1\hat e_1+u_2\hat e_2)\times\gamma]=[u_2\hat e_1-u_1\hat e_2].
\end{gather*}
In other words, $v^h= u_2(\hat e_1)_r-u_1(\hat e_2)_r$. In this expression it is understood that $(\hat e_1)_r$, $(\hat e_2)_r$ are evaluated at the point $R$ to which we are lifting. Therefore, one has an explicit construction of horizontal vectors in ${\rm SO}(3)$, and one can compute $L=L^H+L^V$ without further dif\/f\/iculty.

\section{Extending the un-reduction method and further outlook} \label{sectionsecond}

So far we have considered SODEs $\Gamma_2$ which have the properties that {\em all} their base integral curves project on those of ${\bar\Gamma}$. We now indicate that there may also exist SODEs $\Gamma_3$ which have the property that only {\em some} of their base integral curves project on those of ${\bar\Gamma}$.

\looseness=-1 We consider a principal connection $\omega$ given. We will denote its horizontal distribution by ${\mathcal H} \subset TM$. Integral curves of $\bar\Gamma$ satisfy $\ddot x^i = f^i(x,\dot x)$. Consider again the primary un-reduced SODE $\Gamma_1 = v^i X_i^C + f^i(x^j,v^j) X_i^V + v^a {\tilde E}^C_a + 0 {\tilde E}^V_a $. Its integral curves satisfy $\ddot x^i = f^i(x,\dot x)$ and $\dot v^a=0$, together with some reconstruction equation that is primarily associated to the part $X_\omega=v^a {\tilde E}^C_a$ of $\Gamma_1$. Recall that we have shown in Proposition~\ref{propprim2} that integral curves of $\Gamma_1$ with a~horizontal initial velocity are horizontal lifts of base integral curves of $\bar\Gamma$ and that $\Gamma_1$ is tangent to ${\mathcal H}$.

Consider now a SODE of the type $\Gamma_3 = v^i X_i^C + F^i(x^j,v^j,v^a) X_i^V + v^a {\tilde E}^C_a + F^a(x^j,v^j,v^a) {\tilde E}^V_a$, but with the property that $F^i(x^j,v^j,v^a=0)=f^i(x^j,v^j)$ and $F^a(x^j,v^j,v^a=0) =0$, i.e., $\Gamma_3 |_{\mathcal H} = \Gamma_1 |_{\mathcal H}$. For example, besides the property on $F^i$, one could simply have that $F^a=0$. (This property, that there is no component along ${\tilde E}^V_a$, can be characterized by saying that the SODE $\Gamma_3$ satisf\/ies $\Omega(\Gamma_3) = \omega^C(\Gamma_3)$.) Integral curves of such a $\Gamma_3$ through $m_0$ with initial velocity $v^i_0 X_i(m_0) + v^a_0 {\tilde E}_a(m_0)$ satisfy, for sure, $v^a(t) = v^a_0$. As long as we consider integral curves of $\Gamma_3$ with a horizontal initial velocity, i.e., with $v^a_0 = 0$, they will satisfy $v^a(t) = 0$, and $(x^i(t),v^i(t))$ will be solutions of
 \begin{gather*}
 {\ddot x}^i = F^i(x^j,v^j,0)=f^i(x^j,v^j).
 \end{gather*}
With other words, some integral curves of $\Gamma_3$, namely the horizontal ones, will project on those of $\bar\Gamma$.

This situation actually also occurs in the example with Wong's equations. The quadratic spray $\Gamma_g$ is not one of the un-reduced SODEs of ${\bar\Gamma}_{\bar g}$, but it does have the property that $\Gamma_g |_{\mathcal H} = \Gamma_1 |_{\mathcal H}$, since $A|_{\{v^a=0\}}=0$. We can now relate this behaviour to what is called ``horizontal shooting'' in~\cite{MM, CH}.

We have shown that the geodesic spray $\Gamma_g$ satisf\/ies $\Gamma_g(B_{av} v^b)=0$. This means that its integral curves (i.e., the lifted curves in $TM$ of the base integral curves), whose f\/ibre coordinates when written in quasi-velocities are $(v^i(t),v^a(t))$, satisfy a conservation law (of momentum-type) $B_{av} v^b(t) = \mu_a$. Here $\mu_a$ are the components of an element $\mu\in\la^*$ along the basis $E_a$. The specif\/ic base integral curves which happen to have ``zero momentum'', $\mu_a=0$, are exactly those whose lifted curves remain horizontal for all $t$, $v^a(t) =0$. They, therefore, coincide with the horizontal lifts of base integral curves. Given that we know that $\Gamma_g |_{\mathcal H} = \Gamma_1 |_{\mathcal H}$, it follows that Proposition~\ref{propprim2} represents a generalization of the method of ``horizontal shooting'' for geodesic problems.

One may f\/ind in the literature some results on what could be called ``un-un-reduction''. The inverse procedure of un-reduction should of course be a kind of reduction process. It is, however, not Lagrange--Poincar\'e reduction, since the corresponding reduced Lagrange--Poincar\'e equations can not necessarily be associated to a SODE on $\bar M$. But, for an arbitrary $G$-invariant SODE $\Gamma$ on $TM$, it makes sense to wonder whether it is $T\pi$-related to a (yet to be determined) SODE~$\bar\Gamma$ on~$\bar M$. An answer to this question is given in the papers on submersive SODEs. A~SODE~$\Gamma$ on~$M$ is said to be ``submersive'' in \cite{KT,SPC} if there exists a projection $\pi\colon M \to N$ for which it is projectable to a SODE $\bar\Gamma$ on~$N$. In the theory of \cite{KT,SPC} the projection $\pi$ is part of the unknowns, but if we assume that $\pi$ is a given principal f\/ibre bundle $\pi\colon M \to\bar M$ from the outset, being submersive means that $\Gamma$ is an un-reduction of an (unknown) SODE $\bar \Gamma$ in~$\bar M$. The next proposition can be found as Theorem~3.1 in~\cite{KT}.
\begin{proposition} Let $M\to\bar M$ be a principal bundle and let $\Gamma$ be a SODE on $M$. Under the following two conditions~$\Gamma$ is submersive:
\begin{enumerate}\itemsep=0pt
\item[$(1)$] $\big[\Gamma, {\tilde \xi}^C\big] =0$ $($with other words, it is $G$-invariant$)$,
\item[$(2)$] $\big[\tilde\xi^V, \Gamma\big]-\tilde\xi^C$ is tangent to~${\la}^V$.
\end{enumerate}
\end{proposition}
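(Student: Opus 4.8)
The plan is to reduce the claim to two ingredients: the elementary projectability criterion for a principal bundle, and the already-established fact (part~(1) of the proposition on SODE un-reductions in Section~\ref{sectionsode}) that a SODE which un-reduces a vector field necessarily projects onto a \emph{SODE}. So the real content is to show that conditions~(1) and~(2) are exactly what forces $\Gamma$ to project via $T\pi\colon TM\to T\bar M$. First I would recall the projectability criterion for the principal $TG$-bundle $T\pi$. Since $G$ is assumed connected, so is $TG$, and a vector field $\Gamma$ on $TM$ is $T\pi$-related to some vector field $\bar\Gamma$ on $T\bar M$ if and only if $[\tilde\eta,\Gamma]$ is tangent to the fibres of $T\pi$ for every fundamental vector field $\tilde\eta$ of the $TG$-action. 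The fibre-tangent (i.e.\ $T\pi$-vertical) distribution is precisely the one spanned by the $\tilde\xi^C$ and $\tilde\xi^V$ (write $\la^C$ and $\la^V$ for the two summands), and since every fundamental vector field of the $TG$-action has the form $\tilde\xi^C+\tilde\zeta^V$ with $(\xi,\zeta)\in\la\times\la$, linearity of the bracket reduces the verification to the two families $[\tilde\xi^C,\Gamma]$ and $[\tilde\xi^V,\Gamma]$.

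Second, I would check these two families directly against the hypotheses. Condition~(1) gives $[\tilde\xi^C,\Gamma]=0$, which is trivially $T\pi$-vertical. Condition~(2) gives $[\tilde\xi^V,\Gamma]=\tilde\xi^C+W$ with $W$ tangent to $\la^V$; since $\tilde\xi^C$ is tangent to $\la^C$ and $W$ to $\la^V$, the whole bracket lies in $\la^C\oplus\la^V$ and is $T\pi$-vertical. This can be cross-checked against the frame computation already displayed in the preliminaries: for a SODE $\Gamma=v^iX_i^C+v^a\tilde E_a^C+F^iX_i^V+F^a\tilde E_a^V$ one reads off $[\tilde E_a^V,\Gamma]=\tilde E_a^C+\tilde E_a^V(F^i)X_i^V+(\tilde E_a^V(F^b)+v^cC^b_{ca})\tilde E_b^V$, so condition~(2) is equivalent to the vanishing of the offending horizontal term, $\tilde E_a^V(F^i)=0$, which is exactly what is needed for $[\tilde\xi^V,\Gamma]$ to become fibre-tangent. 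Thus both families are $T\pi$-vertical, and the criterion of the first step produces a vector field $\bar\Gamma$ on $T\bar M$ with $\Gamma$ an un-reduction of $\bar\Gamma$.

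Finally, I would promote $\bar\Gamma$ from a bare vector field to a SODE. Because $\Gamma$ is itself a SODE and an un-reduction of $\bar\Gamma$, the first statement of the proposition on SODE un-reductions in Section~\ref{sectionsode} applies verbatim and forces $\bar\Gamma$ to be a SODE on $\bar M$; hence $\Gamma$ is submersive. The genuinely delicate point is the projectability criterion of the first step: one must justify that the infinitesimal bracket condition is equivalent to constancy of $T\pi\circ\Gamma$ along the fibres, which is where connectedness of $TG$ enters, and one must note that testing on the two generating types $\tilde\xi^C$ and $\tilde\xi^V$ suffices by linearity of the bracket in its first slot. Once that criterion is in place, everything else is a direct reading-off from the two hypotheses and an appeal to the earlier proposition.
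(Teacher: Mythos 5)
Your proof is correct, and it reaches the conclusion by a genuinely different route than the paper's. The paper works entirely in the invariant frame $\big\{X_i,{\tilde E}_a\big\}$: writing $\Gamma = v^i X_i^C + v^a {\tilde E}^C_a + F^i X_i^V + F^a {\tilde E}^V_a$, it reads off from the bracket formulas that condition~(2) gives ${\tilde E}^V_b(F^i)=0$ and condition~(1) gives ${\tilde E}^C_b(F^i)=0$, concludes that $F^i$ is $TG$-invariant and descends to a function $f^i$ on $T\bar M$, and then \emph{constructs} the reduced field explicitly as $\bar\Gamma = {\dot x}^i\partial/\partial x^i + f^i \partial/\partial{\dot x}^i$, whose SODE property is immediate from the formula. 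You instead invoke the general projectability criterion on the principal $TG$-bundle $T\pi$ (verticality of brackets with fundamental vector fields, valid for connected structure group), verify the two generating families $\big[{\tilde\xi}^C,\Gamma\big]$ and $\big[{\tilde\xi}^V,\Gamma\big]$ directly from hypotheses~(1) and~(2) without needing the frame at all, and then upgrade the abstractly obtained $\bar\Gamma$ to a SODE by citing part~(1) of the un-reduction proposition of Section~\ref{sectionsode} --- a piece of structure the paper has already proved but does not reuse here. Both arguments rest on the same infinitesimal data, including connectedness: the paper uses it implicitly in passing from ${\tilde E}_a^{\{C,V\}}(F^i)=0$ to $TG$-invariance of $F^i$, while you make it explicit in the projectability lemma, which you correctly identify as the one standard fact left unproved. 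What the paper's computation buys is an explicit expression for the reduced SODE (the coefficients $f^i$); what your argument buys is a coordinate- and frame-free proof that cleanly separates the bundle-theoretic step (existence of a projected vector field) from the second-order step (SODE-ness, delegated to the earlier proposition), and your frame computation of $\big[{\tilde E}_a^V,\Gamma\big]$, which matches the paper's displayed bracket up to sign conventions, serves only as a consistency check rather than as the engine of the proof.
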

\begin{proof}
If $\{E_a\}$ is a basis for $\la$, then ``being tangent to ${\la}^V$'' means that the expression in (2) is of the type $A^a {\tilde E}_a^V$.
If we write, as before, $\Gamma$ in the form
\begin{gather*}
\Gamma = v^i X_i^C + v^a {\tilde E}^C_a + F^i X_i^V + F^a {\tilde E}^V_a
\end{gather*}
with $F^i$, $F^a$ functions on $TM$, then condition (2) implies that ${\tilde E}^V_b(F^i)=0$, while from condi\-tion~(1) we get that ${\tilde E}^C_b(F^i)=0$ (and also that ${\tilde E}^C_b(F^a) = C^a_{bc}F^c$). Therefore, the function~$F^i$ is $TG$-invariant, and it def\/ines a function $f^i$ on $T\bar M$. The SODE on $\bar M$ given by $\bar\Gamma = {\dot x}^i\partial/\partial {x}^i+ f^i\partial/\partial {\dot x}^i$ is the one for which $\Gamma$ is an un-reduction.
\end{proof}

\looseness=-1 With the results of \cite{KT,SPC} in mind, it would be of interest to be able to ``un-reduce'' a~SODE~$\bar\Gamma$ on a manifold~$\bar M$ to any bundle $\pi$ for which $\bar M$ happens to be the base (given a connection on that bundle), not just a principal bundle $M\to \bar M =M/G$. Results which may point in that direction are Theorem~1.5 in~\cite{KT}, or Theorem~2.1 in~\cite{SPC}, which give conditions for a~SODE~$\Gamma$ on~$M$ to be submersive. Moreover, some of the concepts that we have mentioned in this paper transfer to this more general context. For example, one may still def\/ine a~Vilms connection on the bundle~$TM \to T\bar M$, corresponding to a connection on an arbitrary f\/ibre bundle~$M\to \bar M$. However, constructing something that is similar to the vertical connection~$\Omega$ (that has lead us to the def\/inition of~$X_\omega$ and the primary un-reduced SODE~$\Gamma_1$) seems to be a more challenging task.

\subsection*{Acknowledgements}

EGTA thanks the CONICET for f\/inancial support through a Postdoctoral Grant. TM is a~vi\-siting professor at Ghent University: he is grateful to the Department of Mathematics for its hospitality.

\pdfbookmark[1]{References}{ref}
\LastPageEnding

\end{document}